\documentclass[11pt]{article}
\usepackage[a4paper,bindingoffset=0.in,left=2.5cm,right=2.5cm,top=2cm,bottom=3cm]{geometry}

\usepackage{mathtools,amsmath,amssymb,amsthm}
\usepackage{comment,xcolor}
\usepackage{multirow}
\usepackage{makecell}
\usepackage{array,booktabs}
\usepackage{color, colortbl}
\usepackage{kbordermatrix}
\usepackage{cancel} 
\usepackage{tabstackengine}[2016-11-30]
\usepackage{xcolor}
\usepackage[labelfont=bf]{caption}
\usepackage{algorithm}
\usepackage{algpseudocode}
\usepackage{siunitx}
\usepackage[affil-it]{authblk}
\usepackage{tikz}
\usepackage{blkarray}
\usetikzlibrary{arrows.meta}
\usetikzlibrary{positioning}

\usepackage{cite}

\newtheorem{theorem}{Theorem}[section]
\newtheorem{proposition}[theorem]{Proposition}

\newtheorem{lemma}[theorem]{Lemma}

\newtheorem{corollary}[theorem]{Corollary}

\theoremstyle{definition} 

\newtheorem{definition}[theorem]{Definition}

\newtheorem{example}[theorem]{Example}
\renewenvironment{proof}{{\noindent\bfseries Proof.}}{\qed} 

\usepackage[colorlinks=true,allcolors=blue!75!black,backref=page]{hyperref}

\newcommand{\rnk}{\operatorname{rank}}
\newcommand{\supp}{\operatorname{supp}}
\renewcommand{\vec}{\operatorname{vec}}
\newcommand{\gr}{\operatorname{Gr}}

\newcommand{\ZZ}{\mathbb{Z}}
\newcommand{\bb}{\mathbf{b}}
\newcommand{\be}{\mathbf{e}}
\newcommand{\bu}{\mathbf{u}}
\newcommand{\bv}{\mathbf{v}}
\newcommand{\bx}{\mathbf{x}}
\newcommand{\by}{\mathbf{y}}

\title{Markov and lattice bases for Forman-Ricci curvature of graphs}

\author[1]{Jane Ivy Coons\thanks{Contact: jcoons@wpi.edu}}
\author[2,3]{Giulio Zucal\thanks{Contact: zucal@mpi-cbg.de}}

\affil[1]{Worcester Polytechnic Institute, Massachusetts, USA}

\affil[2]{Max Planck Institute of Molecular Cell Biology and Genetics, Dresden, Germany}
\affil[3]{Center for Systems Biology Dresden, Germany}
\date{\today}

\begin{document}
\maketitle

\begin{abstract}

Discrete Forman-Ricci curvature is a quantity associated to each edge of a graph that describes its local geometry. It has proven to be a useful tool in network analysis in a variety of applications. Recent work by Roost et al.\ (2024) proposed the use of Markov bases to sample from the space of graphs with prescribed vertex degrees and curvatures. In the present work, we further develop the algebraic and combinatorial theory of these Markov bases. We show that the degree of an indispensable Markov move grows at least quadratically in the maximum degree of the graph. In light of this result, a compact description of all Markov basis elements seems unattainable at present. Instead, we find a lattice basis for this problem using only degree three Markov moves, which allows us to employ recently-developed reinforcement learning methods for finding Markov moves that can be applied to a specific graph.

\textbf{Keywords:} network geometry, discrete curvature, Markov bases, Markov chain Monte Carlo methods, edge-based network measures, lattice bases, toric varieties, Graver bases
\end{abstract}

\section{Introduction}\label{section: introduction}

Over the past several decades, the study of networks has become central across disciplines: from neurobiology and sociology to computer science and power grid optimization. Networks represent interactions between multiple agents and when only pairwise interactions are considered they can be naturally represented as graphs. While many traditional approaches to networks are problem-specific, recent research has shifted toward finding a unified language to transfer techniques between different disciplines. A powerful strategy in this direction is adapting well-established concepts from mathematics to networks. In particular, ideas from geometry have proven highly successful at capturing structural information of graphs.

A key geometric quantity is Ricci curvature, which measures how a Riemannian manifold deviates from Euclidean space. It has been generalized to many different contexts beyond Riemannian manifolds with graphs as important examples. In this setting, the properties characterizing Ricci curvature on manifolds are not equivalent anymore and for this reason we are equipped with a plethora of discrete Ricci curvatures each capturing different geometric features of the discrete structure.

The main focus of this article will be Forman-Ricci curvature \cite{forman2003bochner}. Originally introduced for CW-complexes via an analogy with the Bochner–Weitzenböck formula, its specialization to graphs yields a simple edge-based expression depending solely on the degrees of the connecting vertices. However, many other notions of discrete curvatures exist. 
For example, Ollivier-Ricci curvature \cite{ollivier2007ricci, ollivier2010survey} is based on optimal transport and neighbourhood overlap (and has been further refined by Lin, Lu and Yau \cite{OllivTheor2Lin}), and Bakry-Emery curvature \cite{bakry2013analysis} was introduced via a discrete analogue of the Bochner identity. We refer the reader to \cite{Devriendt_2022,devriendt2023graph,erbar_2012_ricci,steinerberger_2023_curvature} for further notions of discrete curvatures on graphs.

The study of  discrete Ricci curvature has attracted considerable attention recently. In particular, all the curvatures described above have been shown to enjoy similar properties of their smooth counterpart  \cite{OllivTheor1Lin, OllivTheor2Lin, OllivTheor3Bauer, OllivTheor4Jost, OllivTheor5Loisel, forman2003bochner, liu_2018_bakry, devriendt2023graph, steinerberger_2023_curvature}
and some of these quantities have been shown to play a key role in the context of Markov chain convergence \cite{salez2023cutoff, salez2024spectral, salez2025modernaspectsmarkovchains}. While we focus on the discrete curvature of graphs in the present paper, recent work has also addressed discrete curvature of the $2$-skeleta of polytopes   \cite{deloera2025discretecurvaturesconvexpolytopes}.

Beyond the theoretical interest, discrete curvatures have been also successfully applied to many scientific domains. These include analysis of biological and protein interaction networks \cite{OllivAppcancNet, Eidi2020EdgebasedAO} and detection of instabilities in financial networks \cite{CurvMarketInstab}. Moreover, discrete curvatures have been employed to mitigate oversquashing phenomena in graph neural networks \cite{topping_2022_understanding, fesser2024mitigating} and in community detection \cite{ComDet, fesser_2023_augmentations, tian2025jmlr}.

A recent line of work has addressed the inverse problem of constructing graphs with prescribed discrete curvature sequences. In particular, the authors of \cite{roost2024exploring} studied the problem of exact reconstruction of graphs from a given target Forman–Ricci curvature. Combining the algebraic theory of Markov bases together with characterizations of admissible joint degree sequences, they transformed this into a combinatorial problem. They proved that any two graphs sharing the same degree and curvature sequences can be transformed into one another using a finite number of rewiring moves, which can in theory be computed using computational algebraic geometry methods. These consist of simple moves (transpositions) and more complex changes dictated by the Markov basis (Markov moves).
A major computational bottleneck identified in \cite{roost2024exploring} is the explicit computation of these Markov bases. Using the computer algebra package 4ti2 within Macaulay2, Markov bases were obtained only for graphs with relatively small maximum node degree (up to degree 8), as computations for larger degrees failed to terminate within reasonable time. This limitation is not specific to the problem at hand: computing Markov bases is well known to be computationally expensive in general \cite{de2006markov}. Nevertheless, it is well known that specialized structural insights can substantially reduce complexity for particular classes of toric ideals and fibers.

Motivated by these challenges, the present work develops a deeper algebraic and combinatorial understanding of Markov bases associated with graphs of prescribed Forman–Ricci curvature and degree sequences. 
We show that the maximum degree of an indispensable Markov move grows at least quadratically in $\Delta$.
This result may be interpreted as a fundamental complexity limitation, demonstrating that the minimal algebraic complexity of Markov bases necessarily diverges quickly as $\Delta$ tends to infinity.

Despite this inherent complexity, we derive explicit formulas for simpler lattice bases, which can be used as building blocks for practical sampling algorithms. Furthermore, given the computational difficulty of symbolic Markov basis construction, we complement our theoretical analysis with a machine learning approach. Specifically, we employ the \emph{actor–critic fiber sampler} \cite{ActorCriticMarkovBases_Petrovic}, a recently-developed reinforcement learning algorithm designed to learn effective sampling moves within constrained fibers. We demonstrate that this method performs well for graphs with prescribed curvature and degree sequences of moderate size.

\subsection*{Organization}

 The remainder of this manuscript is structured as follows. In Section~\ref{Section2Defin}, we establish the notation and we provide  fundamental definitions regarding graphs and discrete curvature and the necessary algebraic background. In Sections~\ref{sec:DegreeDiverges} and~\ref{Sec:latt_basis}, we present our main theoretical results concerning Markov and lattice bases. In Section~\ref{Section:ReinforcemengtLearning}, we apply the actor-critic fiber sampler algorithm as a more computationally feasible approach to exploring fibers of specific graphs. 
\section{Definitions and notation}\label{Section2Defin}

\noindent We briefly recall some foundational concepts from graph theory and fix our notation. We introduce the concepts of Forman-Ricci curvatures and joint degree matrix of a graph. Finally we introduce Markov bases and describe recent work on how they can be used to explore the space of joint degree matrices of graphs with a fixed Forman-Ricci curvature \cite{roost2024exploring}. Since the fibers that we wish to sample from are constrained by upper bounds on the entries of a joint degree matrix, the Markov bases that we seek are in fact \emph{Graver bases}. We conclude the section with background on Graver bases and primitive binomials.

\begin{definition}
A \emph{simple graph} is a pair $G=(V,E)$, where $V=V(G)=\{v_1,\ldots,v_N\}$ is a finite set of vertices and $E=E(G) \subset 
\binom{V}{2}$ is a set of edges.
\end{definition}

Throughout, we fix a simple graph $G=(V,E)$.  
The \emph{degree} of a vertex $v\in V$, denoted by $\deg_G(v)$ or simply $\deg(v)$, is the number of edges incident to $v$. We assume that all vertices satisfy $\deg(v)\geq 1$, i.e., the graph has no isolated vertices.  If two vertices $v,w\in V$ are connected by an edge, we write $v\sim w$ (equivalently $w\sim v$), and denote the corresponding edge by $\{v,w\}$ or simply $vw$.

The \emph{maximum degree} of a graph $G = (V,E)$ is the maximum over all vertices $v \in V$ of $\deg_G(v)$.
Let $G=(V,E)$ be a graph with maximum degree $\Delta$.  
For each $a\in\{1,\ldots,\Delta\}$ we define
\[
V_a := \{v\in V : \deg(v)=a\}
\]
so that the cardinality of this set is the number of vertices of degree $a$.

\begin{definition}
The \emph{degree frequencies} of $G$ are given by the $\Delta$-tuple
\[
(\lvert V_1\rvert,\lvert V_2\rvert,\ldots,\lvert V_\Delta\rvert),
\]
whose $a$-th entry counts the number of vertices of degree $a$.
\end{definition}

\subsection{Forman--Ricci curvatures and joint degree matrices}\label{section: background, FR and JDM}
As discussed in the introduction, we investigate the structure of graphs using discrete Forman--Ricci curvature at each of its edges. Originally defined for general CW complexes \cite{forman2003bochner}, this curvature admits a particularly simple expression for graphs in terms of the degrees of the two vertices in an edge.

\begin{definition}
Let $G$ be a graph and let $e=\{u,v\}\in E(G)$.  
The \emph{Forman--Ricci curvature} of the edge $e$ is defined by
\begin{equation}\label{eq: definition Forman curvature}
F(e) = 4 - \deg(u) - \deg(v).
\end{equation}
\end{definition}
The graph in Figure \ref{fig: example FR} has each of its edges labeled with its Forman-Ricci curvature. This graph will serve as a running example throughout the paper. Note that the curvature of an edge $uv$ is determined by $\deg(u) + \deg(v)$, and that we must have $2 \leq \deg(u) + \deg(v) \leq 2\Delta$.
For $\kappa\in\{2,\ldots,2\Delta\}$ we define
\[
E_\kappa := \{uv\in E(G): \deg(u)+\deg(v)=\kappa\}.
\]
Equivalently, $E_\kappa$ is the set of edges with Forman--Ricci curvature $4-\kappa$.

\begin{figure}[h]
\centering
\begin{tikzpicture}[
    vertex/.style={circle, draw, thick, inner sep=2pt},
    edge/.style={thick}
]

\node[vertex] (a1) at (0,0) {};
\node[vertex] (a2) at (0,2) {};
\node[vertex] (a3) at (2,0) {};
\node[vertex] (a4) at (2,2) {};
\node[vertex] (a5) at (4,0) {};
\node[vertex] (a6) at (4,2) {};
\node[vertex] (a7) at (6,0) {};
\node[vertex] (a8) at (6,2) {};

\node at (0.65,1) {-2};
\node at (3.35,1) {-3};
\node at (5.35,1) {0};

\draw[edge] (a1) --node[above]{-1} (a3);
\draw[edge] (a1) -- (a4);
\draw[edge] (a2) --node[above]{-1} (a4);
\draw[edge] (a3) --node[left]{-3} (a4);
\draw[edge] (a3) --node[above]{-2} (a5);
\draw[edge] (a4) -- (a5);
\draw[edge] (a5) --node[right]{-1} (a6);
\draw[edge] (a6) -- (a7);
\draw[edge] (a7) --node[right]{1} (a8);
\end{tikzpicture}
\caption{Forman--Ricci curvature values $F(e)$ on the edges of a graph.}
\label{fig: example FR}
\end{figure}

\begin{definition}
The \emph{curvature frequency sequence} of $G$ is the $(2\Delta-1)$-tuple
\[
(\lvert E_2\rvert,\ldots,\lvert E_{2\Delta}\rvert),
\]
where $\lvert E_\kappa\rvert$ denotes the number of edges with Forman--Ricci curvature $4-\kappa$.
\end{definition}

\begin{example}\label{example: degree and curvature frequencies}
Consider our running example graph depicted in Figure~\ref{fig: example FR}, which has maximum degree $\Delta = 4$. Its degree frequency sequence is $(2,3,2,1)$; that is, it has two vertices of degree $1$, three vertices of degree $2$, two vertices of degree $3$ and one vertex of degree $4$. Its curvature frequency sequence is $(0,1,1,3,2,2,0).$ Interpreting the fourth entry of this sequence corresponding to $E_5$, for example, we see that the graph has three edges $uv$ where $\deg(u) + \deg(v) = 5.$

\end{example}

Since the Forman--Ricci curvature at a given edge depends only on the degrees of its two endpoints, we associate to a graph the following matrix that encodes how vertex degrees are paired according to edges.

\begin{definition}
The \emph{joint degree matrix} (JDM) of a graph $G$ with maximum degree $\Delta$ is the symmetric
$\Delta\times\Delta$ matrix $J=(J_{ab})$ with entries
\[
J_{ab}
=\left\lvert
\bigl\{
uv\in E(G) : \{\deg(u),\deg(v)\}=\{a,b\}
\bigr\}
\right\rvert\]
That is, the $ab$ entry of $J$ is the number of edges in $G$ whose two endpoints have degrees $a$ and $b$.
\end{definition}

The JDM provides a compact representation of several global graph statistics.  
In particular, both degree and curvature frequencies can be recovered directly from $J$.
Indeed, the number of vertices of degree $a$ is obtained from the $a$-th row of the JDM as
\begin{equation}\label{eq: number of vertices from JDM}
|V_a|=\frac{1}{a}\Bigl(\sum_{b=1}^{\Delta} J_{ab}+J_{aa}\Bigr).
\end{equation}
Moreover, the number of edges with Forman--Ricci curvature $4-\kappa$ is given by the sum
\[
|E_\kappa|=\sum_{\substack{a\leq b\\ a+b=\kappa}} J_{ab}.
\]

\begin{example}\label{ex: jdm}
    The joint degree matrix of the graph pictured in Figure \ref{fig: example FR} is
    \[
    J = \begin{pmatrix}
        0 & 1 & 0 & 1 \\
        1 & 1 & 2 & 1 \\
        0 & 2 & 1 & 2 \\
        1 & 1 & 2 & 0
    \end{pmatrix}.
    \]
    Note, for example, that the number of vertices of degree $3$ is indeed $2 = \frac{1}{3} \big( 0 + 2 + 2\cdot 1 + 2\big)$. Furthermore the number of edges with curvature $-1 = 4 -5$ is
    \[
    |E_5| = \sum_{\substack{a \leq b \\ a + b = 5}} J_{ab} = J_{14} + J_{23} = 1 + 2 = 3. 
    \]
\end{example}

A complete characterization of matrices that arise as JDMs of simple graphs was obtained
by Stanton and Pinar~\cite{stanton_2012_constructing}. They establish that the JDMs for graphs with a fixed maximal degree are the lattice points in a certain semialgebraic set for which the expression given in Equation \ref{eq: number of vertices from JDM} is an integer.

\begin{theorem}[{\cite[Thm.~4.2]{stanton_2012_constructing}}]
\label{thm: characterization of JDMs}
A symmetric matrix $J\in\mathbb{N}^{\Delta\times\Delta}$ is the joint degree matrix of a simple
graph if and only if, for all distinct $a,b\in\{1,\dots,\Delta\}$, the following conditions hold:
\begin{itemize}
    \item[(i)] $\displaystyle |V_a|=\frac{1}{a}\Bigl(\sum_{c=1}^{\Delta}J_{ac}+J_{aa}\Bigr)$ is an integer;
    \item[(ii)] $J_{ab}\leq |V_a|\cdot |V_b|$;
    \item[(iii)] $J_{aa}\leq \binom{|V_a|}{2}$.
\end{itemize}
\end{theorem}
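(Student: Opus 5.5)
Necessity is the easy direction. Suppose $J$ is the JDM of a simple graph $G$. Fix a degree $a$ and sum the degrees of the vertices in $V_a$: this sum equals $a|V_a|$.

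Count the same quantity by edge endpoints. Each edge with endpoint degrees $\{a,c\}$, $c\neq a$, contributes one endpoint of degree $a$. Each edge with both endpoints of degree $a$ contributes two. So the row sum plus $J_{aa}$ equals $a|V_a|$, and (i) holds. For (ii), the edges counted by $J_{ab}$ with $a\ne b$ are distinct pairs in $V_a\times V_b$, because $G$ is simple. For (iii), the edges counted by $J_{aa}$ are distinct $2$-subsets of $V_a$.

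For sufficiency I would construct a graph explicitly. First create $n_a:=|V_a|$ vertices of degree class $a$ for each $a$; this is an integer by (i). Each vertex gets $a$ "stubs." The $a\cdot n_a = \sum_c J_{ac}+J_{aa}$ stubs in class $a$ must be split into blocks: $J_{ac}$ stubs reserved for class $c\neq a$, and $2J_{aa}$ stubs for internal edges.

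Key step 1: distribute the stubs of each class among its vertices as evenly as possible. I would do this greedily: list the vertices of $V_a$ cyclically and assign the $J_{ab}$ block for each $b$ in turn, wrapping around. Then for every $b$, each vertex of $V_a$ receives either $\lfloor J_{ab}/n_a\rfloor$ or $\lceil J_{ab}/n_a\rceil$ stubs toward class $b$. The total per vertex is exactly $a$.

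Key step 2: realize each block. For $a\neq b$ we need a simple bipartite graph between $V_a$ and $V_b$ with near-regular prescribed degrees on both sides. This exists by Gale–Ryser, since near-regular sequences whose sums agree and satisfy $J_{ab}\le n_an_b$ are always bigraphic. For $a=b$ we need a simple graph on $n_a$ vertices with near-regular degrees summing to $2J_{aa}$ and each degree at most $n_a-1$. This exists by Erdős–Gallai, since near-regular sequences with even sum and max below $n_a$ are graphic, and (iii) guarantees the maximum degree is at most $n_a-1$. Different blocks use disjoint vertex pairs (different class pairs), so their union is simple.

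The main obstacle is step 1 combined with the diagonal case. The near-regularity must hold simultaneously for each block while per-vertex totals equal $a$, and the diagonal block needs an even sum together with the bound $\lceil 2J_{aa}/n_a\rceil \le n_a-1$, which follows from $2J_{aa}\le n_a(n_a-1)$. I would handle this by cycling through one long sequence of $a n_a$ stub slots laid out vertex by vertex round-robin. Consecutive blocks of lengths $J_{ab}$ (and $2J_{aa}$) then hit each vertex at most $\lceil\cdot/n_a\rceil$ times. Finally I would verify the Gale–Ryser and Erdős–Gallai conditions for near-regular sequences; alternatively, I would cite \cite{stanton_2012_constructing} directly for this step, as the paper does.
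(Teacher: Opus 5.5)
Your proof is correct: the necessity half is the standard double count, and the sufficiency construction works. The paper gives no proof of its own here; it quotes the result from Stanton and Pinar. Their sufficiency argument is an explicit greedy construction. The $J_{ab}$ edges are added one at a time, each joining non-adjacent vertices of the two classes with the most free stubs. One then shows this never gets stuck and keeps the free-stub counts within each class within one of each other.

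You use the same balancing principle but apply it \emph{offline}. The round-robin layout of the $a n_a$ slots fixes a balanced allocation in advance. This splits the problem into independent bipartite and single-class realization problems, which you settle with Gale--Ryser and Erd\H{o}s--Gallai. Your route is shorter and more modular but relies on those classical theorems. The greedy argument is self-contained and doubles as the construction algorithm.

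The two checks you deferred do go through. For the block $\{a,b\}$ with $m=J_{ab}$ and $\sigma=\lfloor m/n_b\rfloor$, the sum of any $k$ row degrees is at most $k\lceil m/n_a\rceil$. The Gale--Ryser right-hand side $\sum_j\min(s_j,k)$ equals $m$ when $k>\sigma$, and equals $kn_b\ge k\lceil m/n_a\rceil$ when $k\le\sigma$, by (ii). For the diagonal block, let $\delta$ be the smaller degree value. Erd\H{o}s--Gallai is immediate except at $k=\delta+1$, where the even degree sum is genuinely needed. Indeed, $n-1$ vertices of degree $n-1$ together with one vertex of degree $n-2$ form a near-regular sequence with maximum $n-1$ that is not graphic. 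Your diagonal sum $2J_{aa}$ is even, so this case never arises.
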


The following edge-rewiring move, known as a \emph{transposition}, preserves the JDM of a graph.
Let
\[
\mathcal{E}=\bigl\{ux, vy\bigr\}\subseteq E(G)
\]
be two edges such that vertex $u$ and vertex $v$ have the same degree and define
\[
\mathcal{E}'=\bigl\{uy,vx\bigr\}.
\]
Suppose further that neither $uy$ nor $vx$ belongs to $E(G)$.
The \emph{transposition} of the edge pair $\mathcal{E}$  (with respect to vertices $u$ and $v$) is the graph $G'$ obtained by replacing
$\mathcal{E}$ with $\mathcal{E}'$, that is,
\[
V(G')=V(G),
\qquad
E(G')=(E(G)\setminus\mathcal{E})\cup\mathcal{E}'.
\]
An example of a transposition is shown in Figure~\ref{fig: example transposition}. If we remove the assumption that $\{u,y\},\{x,v\} \not\in E(G)$, then performing a transposition can result in a multigraph.

\begin{figure}
   \centering
    \begin{tikzpicture}[scale=.95,
    vertex/.style={circle, draw, thick, inner sep=2pt},
    edge/.style={ultra thick},
    special_vertex/.style={circle, draw=blue, thick, fill=blue, inner sep=2pt},
    special_edge/.style={ultra thick, purple}
]

\node[special_vertex] (a1) at (0,0) {};
\node[vertex] (a2) at (0,2) {};
\node[vertex] (a3) at (2,0) {};
\node[special_vertex] (a4) at (2,2) {};
\node[special_vertex] (a5) at (4,0) {};
\node[special_vertex] (a6) at (4,2) {};
\node[vertex] (a7) at (6,0) {};
\node[vertex] (a8) at (6,2) {};

\node[below] at (0,-0.1) {v};
\node[above] at (2, 2.1) {y};
\node[above] at (4, 2.1) {u};
\node[below] at (4, -0.1) {x};

\draw[edge] (a1) -- (a3);
\draw[special_edge] (a1) -- (a4);
\draw[edge] (a2) -- (a4);
\draw[edge] (a3) -- (a4);
\draw[edge] (a3) -- (a5);
\draw[edge] (a4) -- (a5);
\draw[special_edge] (a5) -- (a6);
\draw[edge] (a6) -- (a7);
\draw[edge] (a7) -- (a8);

\draw[ultra thick, ->] (7,1) -- (9,1);

\node[special_vertex] (b1) at (10,0) {};
\node[vertex] (b2) at (10,2) {};
\node[vertex] (b3) at (12,0) {};
\node[special_vertex] (b4) at (12,2) {};
\node[special_vertex] (b5) at (14,0) {};
\node[special_vertex] (b6) at (14,2) {};
\node[vertex] (b7) at (16,0) {};
\node[vertex] (b8) at (16,2) {};

\node[below] at (10,-0.1) {v};
\node[above] at (12, 2.1) {y};
\node[above] at (14, 2.1) {u};
\node[below] at (14, -0.1) {x};

\draw[edge] (b1) -- (b3);
\draw[special_edge] (b1)  to[bend right=25] (b5);
\draw[edge] (b2) -- (b4);
\draw[edge] (b3) -- (b4);
\draw[edge] (b3) -- (b5);
\draw[edge] (b4) -- (b5);
\draw[special_edge] (b4) -- (b6);
\draw[edge] (b6) -- (b7);
\draw[edge] (b7) -- (b8);

\end{tikzpicture}
    \caption{A transposition replacing edges $\{u,x\}$ and $\{v,y\}$ with $\{u,y\}$ and $\{v,x\}$.}
    \label{fig: example transposition}
\end{figure}

By construction, transpositions leave the JDM invariant.  
Moreover, the set of all graphs sharing the
same JDM is connected under transpositions
\cite{diaconis_2001_statistical, stanton_2012_constructing}.  
In particular, the space of simple graphs with a fixed JDM is connected via transpositions,
with all intermediate graphs remaining simple
\cite[Thm.~5.2]{stanton_2012_constructing}.
As a consequence, \cite{stanton_2012_constructing} also provides an explicit algorithm for
constructing a simple graph from a prescribed joint degree matrix.

Following the framework set out in \cite[Section~4]{roost2024exploring}, our goal is to explore the set of all JDMs of graphs with fixed degree and curvature frequency sequences using Markov bases. In the following section, we define Markov bases and describe how they can be applied to this problem.

\bigskip
\subsection{Markov bases}

The theory of Markov bases was introduced by Diaconis and Sturmfels
\cite{MarkovBases} in the context of sampling contingency tables, that is,
nonnegative integer matrices with fixed marginals. In general, Markov bases are used to sample non-negative integer vectors in a fiber of an integer matrix.
A key insight of this theory is that such sampling problems can be addressed
using tools from commutative algebra.
We briefly recall the main definitions and results before applying them to
the joint degree matrix setting.

\begin{definition}
Let $A\in\mathbb{Z}^{d\times n}$ be an integer matrix.
A finite set
$\mathcal{M}=\{\bb_1,\dots,\bb_M\}\subseteq\ker_{\mathbb{Z}}(A)$ forms a \emph{Markov basis} for $A$ if for any pair
$\bu,\bv\in\mathbb{N}^n$ with $A\bu=A\bv$, there exist $i_1,\dots,i_L \in [M]$ and $s_1,\dots,s_L \in \{-1,1\}$ such that
\begin{itemize}
    \item[(i)] $\bu+\sum_{j=1}^L s_j\bb_{i_j}=\bv$;
    \item[(ii)] $\bu+\sum_{j=1}^\ell s_j\bb_{i_j}\geq \mathbf{0}$ for all $1<\ell<L$.
\end{itemize}
\end{definition}

For $\bu\in\mathbb{N}^n$, the set
\[
\mathcal{F}_A(\bu)=\{\bv\in\mathbb{N}^n: A\bv=A\bu\}
\]
is called the \emph{fiber} of $\bu$. The elements of a Markov basis are often called \emph{Markov moves}. For any two elements of the same fiber of $A$, the moves in a Markov basis allow us to ``walk" from one element of the fiber to another while always remaining in the fiber. In fact, a Markov basis induces an ergodic Markov chain on each fiber by allowing transitions
$\bu\mapsto \bu+\bb$ whenever $\bb\in\mathcal{M}$ and $\bu+\bb\geq 0$. A Markov basis for $A$ is called \emph{minimal} if no proper subset of it also forms a Markov basis for $A$.

The connection between Markov bases and commutative algebra is established by the
Fundamental Theorem of Markov Bases.
For background material we refer to
\cite{aoki2012markov, MarkovBases, almendra-hernandez_2024_markov}.
Let $R=\mathbb{C}[x_1,\dots,x_n]$.
For $\bu\in\mathbb{N}^n$, write the monomial $\mathbf{x}^\bu=x_1^{u_1}\cdots x_n^{u_n}$.
For $\bu\in\mathbb{Z}^n$, define $\bu^+,\bu^-\in\mathbb{N}^n$ by
$(\bu^+)_i=\max\{\bu_i,0\}$ and $\bu^-=\bu^+-\bu$. In this way,, $\bu^+$ and $\bu^-$ are nonnegative integer vectors with disjoint support. We call $\bu^+$ the \emph{positive support} of $\bu$ and $\bu^-$ its \emph{negative support}.

\begin{theorem}[Fundamental Theorem of Markov Bases {\cite{MarkovBases}}]
\label{thm: fundamental theorem of Markov bases}
Let $A\in\mathbb{Z}^{d\times n}$.
A set $\mathcal{M}\subseteq\ker_{\mathbb{Z}}(A)$ is a Markov basis for $A$ if and only if the
binomials
$\{\mathbf{x}^{\bb^+}-\mathbf{x}^{\bb^-}:\bb\in\mathcal{M}\}$
generate the ideal $\langle \mathbf{x}^{\bb^+}-\mathbf{x}^{\bb^-}:\bb\in\ker_{\mathbb{Z}}(A)\rangle \subset R.$
\end{theorem}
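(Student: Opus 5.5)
We prove both directions of the Fundamental Theorem by working with the toric ideal $I_A := \langle \mathbf{x}^{\bb^+}-\mathbf{x}^{\bb^-}:\bb\in\ker_{\mathbb{Z}}(A)\rangle$. Let $J_{\mathcal{M}}$ denote the ideal generated by the binomials coming from $\mathcal{M}$. Since $\mathcal{M}\subseteq \ker_{\mathbb{Z}}(A)$, we always have $J_{\mathcal{M}}\subseteq I_A$, so only the reverse inclusion needs attention. The link between the two settings is a graph $G_{\mathcal{M},\bu}$ on each fiber $\mathcal{F}_A(\bu)$. Its edges join $\bv$ and $\bv\pm\bb$ whenever $\bb\in\mathcal{M}$ and both endpoints are nonnegative. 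The Markov basis property says exactly that every such graph is connected.

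\emph{Markov basis $\Rightarrow$ generation.} Take a generator $\mathbf{x}^{\bb^+}-\mathbf{x}^{\bb^-}$ of $I_A$. Both $\bb^+$ and $\bb^-$ lie in the same fiber, because $A\bb^+=A\bb^-$. By hypothesis there is a walk $\bb^+=\bv_0,\bv_1,\dots,\bv_L=\bb^-$ with $\bv_j=\bv_{j-1}+s_j\bb_{i_j}$ and every $\bv_j\geq 0$. Write the difference as a telescoping sum, $\mathbf{x}^{\bv_0}-\mathbf{x}^{\bv_L}=\sum_j(\mathbf{x}^{\bv_{j-1}}-\mathbf{x}^{\bv_j})$. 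Each step can be written as $\bv_{j-1}=\mathbf{w}+\bb_{i_j}^{\mp}$ and $\bv_j=\mathbf{w}+\bb_{i_j}^{\pm}$, where $\mathbf{w}=\min(\bv_{j-1},\bv_j)\geq 0$ is taken coordinatewise. This works because $\bb^+$ and $\bb^-$ have disjoint supports. Hence each summand equals $\pm\mathbf{x}^{\mathbf{w}}(\mathbf{x}^{\bb_{i_j}^+}-\mathbf{x}^{\bb_{i_j}^-})$, which lies in $J_{\mathcal{M}}$.

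\emph{Generation $\Rightarrow$ Markov basis.} This is the main step. Suppose $A\bu=A\bv$ with $\bu,\bv\geq 0$. Then $\mathbf{x}^{\bu}-\mathbf{x}^{\bv}\in I_A$, since it equals $\mathbf{x}^{\mathbf{w}}(\mathbf{x}^{(\bu-\bv)^+}-\mathbf{x}^{(\bu-\bv)^-})$ for an appropriate $\mathbf{w}$. By hypothesis we can therefore write
\[
\mathbf{x}^{\bu}-\mathbf{x}^{\bv}=\sum_{k=1}^{r} c_k\,\mathbf{x}^{\mathbf{w}_k}\bigl(\mathbf{x}^{\bb_{i_k}^+}-\mathbf{x}^{\bb_{i_k}^-}\bigr),
\]
with $c_k\in\mathbb{C}$ and monomial multipliers (split polynomial coefficients into terms). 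Each term has the form $c_k(\mathbf{x}^{\mathbf{p}_k}-\mathbf{x}^{\mathbf{q}_k})$, where $\mathbf{p}_k$ and $\mathbf{q}_k$ are adjacent vertices of one graph $G_{\mathcal{M},\cdot}$ and lie in a common fiber.

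Now consider the linear map $\phi$ that sends $\mathbf{x}^{\mathbf{p}}$ to the indicator $e_C$ of the connected component $C$ of $\mathbf{p}$ in its fiber graph. Each term is adjacent, so $\phi(\mathbf{x}^{\mathbf{p}_k}-\mathbf{x}^{\mathbf{q}_k})=0$. Applying $\phi$ to both sides gives $e_{C(\bu)}-e_{C(\bv)}=0$. Hence $\bu$ and $\bv$ lie in the same component, and a path between them yields the required sequence of moves with all intermediate points nonnegative.

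The delicate point is making sure every term is an honest edge between nonnegative points; this holds because $\mathbf{w}_k\geq 0$. Restricting the component map to a single fiber via the multigrading $\deg\mathbf{x}^{\mathbf{p}}=A\mathbf{p}$ makes the argument clean.
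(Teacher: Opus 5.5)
The paper gives no proof of this theorem; it quotes it from Diaconis--Sturmfels \cite{MarkovBases}, so the natural comparison is with the standard argument there. Your proof is correct.

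The direction from Markov basis to generation is the usual telescoping argument. Your splitting $\bv_{j-1}=\mathbf{w}+\bb_{i_j}^{\mp}$, $\bv_j=\mathbf{w}+\bb_{i_j}^{\pm}$ with $\mathbf{w}=\min(\bv_{j-1},\bv_j)\geq\mathbf{0}$ is exactly what makes each summand a monomial multiple of a generator of $J_{\mathcal{M}}$.

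For the converse, the classical route writes $\mathbf{x}^{\bu}-\mathbf{x}^{\bv}$ as a sum of terms $\pm\mathbf{x}^{\mathbf{w}_k}(\mathbf{x}^{\bb_{i_k}^+}-\mathbf{x}^{\bb_{i_k}^-})$ with all coefficients $\pm1$. It then inducts on the number of terms: $\mathbf{x}^{\bu}$ must cancel against some term, which supplies a legal first move from $\bu$, and one recurses on the shorter expression. You instead use the component map $\phi$. Each edge vector $\mathbf{x}^{\mathbf{p}_k}-\mathbf{x}^{\mathbf{q}_k}$ is killed by every ``sum over a connected component'' functional, whereas $\mathbf{x}^{\bu}-\mathbf{x}^{\bv}$ is killed only if $\bu$ and $\bv$ share a component.

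\textbf{What each route buys.} Your route handles arbitrary coefficients in $\mathbb{C}$ at once. You never need to justify passing to a $\pm1$ representation, a step that is essentially equivalent to the connectivity being proved. The inductive route reads the walk directly off the expression. You can recover that feature too: run your argument with the components of the finite graph whose only edges are the pairs $\{\mathbf{p}_k,\mathbf{q}_k\}$. This shows that $\bu$ and $\bv$ are joined using only the moves occurring in the given representation.

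Your closing appeal to the $A$-multigrading is harmless but unnecessary. Fibers partition $\mathbb{N}^n$, so a single global $\phi$ already works.
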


This theorem shows that Markov bases correspond exactly to generating sets of the
\emph{toric ideal} defined by $A$, which we denote $I_A$.
In particular, by the Hilbert Basis Theorem, a finite Markov basis always exists. We will often abuse terminology and call a generating set for a given toric ideal a Markov basis for that ideal; a true Markov basis for the underlying integer matrix can then be obtained from the exponent vectors of the binomials in such a generating set.

\begin{example}
    Consider the matrix
    \[
    A = \begin{pmatrix}
        1 & 1 & 1 & 0 & 0 & 0 \\
        0 & 0 & 0 & 1 & 1 & 1 \\
        1 & 0 & 0 & 1 & 0 & 0 \\
        0 & 1 & 0 & 0 & 1 & 0 \\
        0 & 0 & 1 & 0 & 0 & 1
    \end{pmatrix} \in \ZZ^{5 \times 6}.
    \]
    Its corresponding toric ideal, $I_A = \langle \bx^{\bb^+} - \bx^{\bb^-} \mid \bb^+ - \bb^- \in \ker_\ZZ(A) \rangle $, is the ideal of a Segre embedding. We can compute a generating set for this toric ideal using the computer algebra software Macaulay2 \cite{M2} and we find that
    \[
    I_A = \langle x_1 x_5 - x_2 x_4, x_1 x_6 - x_3 x_4, x_2 x_6 - x_3 x_5 \rangle.
    \]
    Note that each of these binomials is a $2 \times 2$ minor of the generic integer matrix with entries $x_1, \dots, x_6$ read across rows. Recording the exponent vectors of these three binomials and applying the Fundamental Theorem of Markov Bases, we see that the set
    \[
    \mathcal{M}_A = \Big\{ \begin{pmatrix}
        1 \\ -1 \\ 0 \\ -1 \\ 1 \\ 0 
    \end{pmatrix}, \begin{pmatrix}
        1 \\ 0 \\ -1 \\ -1 \\ 0 \\ 1
    \end{pmatrix}, \begin{pmatrix}
        0 \\ 1 \\ -1 \\ 0 \\ -1 \\ 1
    \end{pmatrix}\Big\}
    \]
    forms a Markov basis for $A$. Denote the three elements of $\mathcal{M}$ by $\bb_1, \bb_2, \bb_3$ in their given order. Let $\bu = (1,0,3,4,1,2)^T$ and $\bv = (2,1,1,3,0,4)^T$. Note that $\bu$ and $\bv$ belong to the same fiber of $A$ as $A\bu = A\bv$. We have $\bv = \bu + \bb_3 + \bb_2$ and we note that $\bu + \bb_3$ is nonnegative. 
\end{example}

\subsection{Markov Bases for Joint Degree Matrices and Lawrence Liftings}

Following \cite{roost2024exploring}, we will apply the theory of Markov bases to the problem of exploring the set of all joint degree matrices with fixed degree and curvature sequences. From \eqref{eq: number of vertices from JDM}, we see that two JDMs $J$ and $K$ have the same degree frequencies if for all $a = 1,\dots, \Delta$,
\[
J_{aa} + \sum_{b=1}^\Delta J_{ab} = K_{aa} + \sum_{b=1}^\Delta K_{ab}.
\]
Similarly, they have the same curvature frequencies if for each $\kappa = 2,\dots, 2\Delta$, 
\[
\sum_{\substack{a \leq b \\ a + b = \kappa}} J_{ab} = \sum_{\substack{a \leq b \\ a + b = \kappa}} K_{ab}.
\]
We wish to encode this as $J$ and $K$ belonging to the same fiber of a certain matrix $B_\Delta$. First consider the vectorization of the unique entries of $J$, $\vec(J) = (J_{ab})_{a \leq b}$ listed in lexicographic order. Let $B_\Delta$ be a $(3\Delta -1) \times \binom{\Delta+1}{2}$ matrix whose columns are indexed by pairs $ij$ with $1 \leq i \leq j \leq \Delta$, also ordered lexicographically. For $1 \leq a \leq \Delta$, we have the entry
\[
B_\Delta(a, ij) = \begin{cases}
    2 & \text{ if } i = j = a, \\
    1 & \text{ if exactly one of $i$ or $j$ equals $a$,} \\
    0 & \text{ otherwise.}
\end{cases}
\]
For $2 \leq \kappa \leq 2\Delta$, we have the entry
\[
B_\Delta(\kappa + \Delta - 1, ij) = \begin{cases}
    1 & \text{ if } i + j = \kappa \text{ and} \\
    0 & \text{ otherwise.}
\end{cases}
\]
Then the JDMs $J$ and $K$ have the same degree and curvature frequencies if and only if $B_\Delta \vec(J) = B_\Delta \vec(K)$. In other words, this is the case if and only if $\vec(J)$ and $\vec(K)$ lie in the same fiber of $B_\Delta$.

However, running the Markov chain over a fiber of $B_\Delta$ can yield matrices which are not JDMs of any graph. Indeed, the matrices obtained in this way may not satisfy inequalities (ii) and (iii) of Theorem \ref{thm: characterization of JDMs}. To avoid this, we modify the problem by adding slack variables $S_{ab}$ for $1 \leq a \leq b \leq \Delta$. Fix a JDM $J$ for a graph with $|V_a|$ vertices of degree $a$ for each $a$. Then we require that $S_{ab} \geq 0$ for all $a$ and $b$, that $S_{aa} + J_{aa} = \binom{|V_a|}{2}$ for all $a$ and that $S_{ab} + J_{ab} = |V_a| |V_b|$ for all $a \neq b$. To add these constraints, we use the \emph{Lawrence lifting} of $B_\Delta$,
\begin{equation}\label{eq: lawrence lifting}
    \Lambda(B_\Delta) := \begin{pmatrix}
        B_\Delta & \mathbf{0}_{(3\Delta -1) \times \binom{\Delta+1}{2}} \\
        I_{\binom{\Delta+1}{2}} & I_{\binom{\Delta+1}{2}}
    \end{pmatrix}.
\end{equation}
In this matrix, the last $\binom{\Delta+1}{2}$ columns correspond to the slack variables $S_{ab}$. Then starting with a JDM $J$ and appropriate values of the slack variables $S$, the fiber of $\Lambda(B_\Delta)$ containing $(J,S)$ consists exactly of the JDMs of graphs with the same degree and curvature frequencies as $J$. The use of Lawrence liftings for sampling from bounded fibers was first described in \cite{rogantin2007markov} and further developed in \cite{rapallo2010markov}.

\begin{example}
    The matrix $B_5$ is a $14 \times 15$ integer matrix. Its columns are indexed by pairs $ij$ with $1 \leq i \leq j \leq \Delta$. Its rows are split into two blocks of size $\Delta$ and $2 \Delta -1$: the first (labeled in blue below) is indexed by degree frequencies for degrees $1,\dots, \Delta$ and the second (labeled in red) is indexed by curvature frequencies $\kappa = 2,\dots, 2\Delta$. This matrix is
    \[
    B_5 = \begin{blockarray}{*{16}{c}}
    & 11 & 12 & 13 & 14 & 15 & 22 & 23 & 24 & 25 & 33 & 34 & 35 & 44 & 45 & 55 \\
    \begin{block}{c(ccccccccccccccc)}
        {\color{blue} 1} & 2 & 1 & 1 & 1 & 1 & 0 & 0 & 0 & 0 & 0 & 0 & 0 & 0 & 0 & 0 \\
        {\color{blue} 2} & 0& 1& 0& 0& 0& 2& 1& 1& 1& 0& 0& 0& 0& 0& 0  \\
        {\color{blue} 3} & 0 & 0& 1& 0& 0& 0& 1& 0& 0& 2& 1& 1& 0& 0& 0\\
        {\color{blue} 4} & 0& 0& 0& 1& 0& 0& 0& 1& 0& 0& 1& 0& 2& 1& 0\\
        {\color{blue} 5} & 0& 0& 0& 0& 1& 0& 0& 0& 1& 0& 0& 1& 0& 1&2 \\
        \BAhline
        {\color{red} 2} & 1& 0& 0& 0& 0& 0& 0& 0& 0& 0& 0& 0& 0& 0& 0\\
        {\color{red} 3} & 0& 1& 0& 0& 0& 0& 0& 0&0 & 0& 0& 0& 0& 0& 0\\
        {\color{red} 4} & 0& 0& 1& 0& 0& 1& 0& 0& 0& 0& 0& 0& 0& 0& 0\\
        {\color{red} 5} & 0& 0& 0& 1& 0& 0& 1& 0& 0& 0& 0& 0& 0& 0& 0\\
        {\color{red} 6} & 0& 0& 0& 0& 1& 0& 0& 1& 0& 1& 0& 0& 0& 0& 0\\
        {\color{red} 7} & 0& 0& 0& 0& 0& 0& 0& 0& 1& 0& 1& 0& 0& 0& 0\\
        {\color{red} 8} & 0& 0& 0& 0& 0& 0& 0& 0& 0&0 & 0& 1& 1& 0& 0\\
        {\color{red} 9} & 0& 0& 0& 0& 0& 0& 0& 0& 0& 0& 0& 0& 0& 1& 0\\
        {\color{red} 10} & 0& 0& 0& 0& 0& 0& 0& 0& 0& 0& 0& 0& 0& 0& 1\\
    \end{block}
    \end{blockarray}.
    \]
    The Lawrence lifting of $B_5$ is the $29 \times 30$ matrix described in Equation \ref{eq: lawrence lifting}.
\end{example}

Given any integer matrix $A$, its Lawrence lifting $\Lambda(A)$ has a unique minimal Markov basis \cite[Theorem~7.1]{sturmfels1996grobner}. In order to describe it, we must first define the notion of a primitive Markov move.

\begin{definition}\label{def:primitive}
    Let $A$ be an integer matrix and let $I_A$ be its corresponding toric ideal,
    \[
    I_A = \langle \bx^{\bu^+} - \bx^{\bu^-} \mid \bu^+ - \bu^- \in \ker_\ZZ(A) \rangle.
    \]
    A binomial $\bx^{\bu^+} - \bx^{\bu^-} \in I_A$ is called \emph{primitive} if there does not exist a different nonzero binomial $\bx^{\bv^+} - \bx^{\bv^-} \in I_A$ such that $\bx^{\bv^+}$ divides $\bx^{\bu^+}$ and $\bx^{\bv^-}$ divides $\bx^{\bu^-}$.
\end{definition}

Reframing these moves as integer vectors, this says that a move $\bu^+ - \bu^-$ is primitive if there does not exist another nonzero move $\bv^+ - \bv^-$ such that $\bu^+ - \bv^+ \geq \mathbf{0}$ and $\bu^- - \bv^- \geq \mathbf{0}$. We note that this definition of primitivity differs from the standard definition of a primitive vector of an integer lattice. However, since we only use the notion described in  Definition \ref{def:primitive} in the present work, we refer to an integer vector as a primitive move for the matrix $A$ if its corresponding binomial is primitive in the Markov basis sense.

\begin{definition}
    The \emph{Graver basis} of a toric ideal $I_A$ is the set of all primitive binomials in $I_A$. We denote it by $\gr(I_A)$.
\end{definition}

 The following theorem classifies the unique minimal Markov basis of a matrix of Lawrence type.

\begin{theorem}[\cite{sturmfels1996grobner} Theorem~7.1]\label{ThmGraver}
Let $A$ be an integer matrix and let $\Lambda(A)$ be its Lawrence lifting. The toric ideal $I_{\Lambda(A)}$ has a unique minimal Markov basis which is equal to the Graver basis, $\gr(I_{\Lambda(A)})$. 
\end{theorem}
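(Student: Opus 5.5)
The plan is to identify the lattice $\ker_\ZZ(\Lambda(A))$ and then show two things: the Graver binomials generate $I_{\Lambda(A)}$, and every Graver binomial is \emph{indispensable}, meaning it appears, up to sign, in every Markov basis. These two facts together give that $\gr(I_{\Lambda(A)})$ is the unique minimal Markov basis.

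\emph{Step 1: the kernel and the Graver elements.} Let $A$ have $n$ columns. From the block form of $\Lambda(A)$ in \eqref{eq: lawrence lifting}, a vector $(\bu,\bv)$ lies in $\ker_\ZZ(\Lambda(A))$ if and only if $\bu+\bv=\mathbf{0}$ and $A\bu=\mathbf{0}$. So
\[
\ker_\ZZ(\Lambda(A))=\{(\bu,-\bu):\bu\in\ker_\ZZ(A)\}.
\]
Write the variables as $x_1,\dots,x_n,y_1,\dots,y_n$. The binomial attached to $(\bu,-\bu)$ is $\bx^{\bu^+}\by^{\bu^-}-\bx^{\bu^-}\by^{\bu^+}$. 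A divisibility relation $\bx^{\bv^+}\by^{\bv^-}\mid\bx^{\bu^+}\by^{\bu^-}$ (and likewise for the other terms) is the same as $\bv^+\le\bu^+$ and $\bv^-\le\bu^-$. Hence $(\bu,-\bu)$ is primitive for $\Lambda(A)$ exactly when $\bu$ is primitive for $A$ in the sense of Definition~\ref{def:primitive}.

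\emph{Step 2: the Graver basis generates.} Every $\bu\in\ker_\ZZ(A)$ is a conformal sum of primitive elements. To see this, if $\bu$ is not primitive, pick a nonzero $\bv\ne\bu$ in the kernel with $\bv^\pm\le\bu^\pm$. Then $\bu-\bv$ is also conformal to $\bu$, and both $\bv$ and $\bu-\bv$ have strictly smaller $1$-norm than $\bu$, so we can induct on $\|\bu\|_1$. A conformal decomposition $\bu=\sum\bv_i$ can be applied one summand at a time without any coordinate going negative. In binomial terms, this writes $\bx^{\bu^+}-\bx^{\bu^-}$ as a combination of the primitive binomials with monomial coefficients. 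So $\gr(I_{\Lambda(A)})$ generates $I_{\Lambda(A)}$, and it is finite by the standard finiteness of Graver bases.

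\emph{Step 3: indispensability (the main point).} Fix a primitive $\bu$. I will show the fiber of $\Lambda(A)$ containing $m=\bx^{\bu^+}\by^{\bu^-}$ has only the two elements $m$ and $m'=\bx^{\bu^-}\by^{\bu^+}$. Take any monomial $\bx^{\mathbf a}\by^{\mathbf b}$ in this fiber. Then $\mathbf a+\mathbf b=\bu^++\bu^-$ and $A\mathbf a=A\bu^+$, so $\mathbf w:=\bu^+-\mathbf a$ lies in $\ker_\ZZ(A)$. Now compare coordinates, using $\mathbf a\ge\mathbf 0$ and $\mathbf a\le\bu^++\bu^-$:
\begin{itemize}
\item on $\supp\bu^+$ we have $0\le w_i\le u_i$;
\item on $\supp\bu^-$ we have $-|u_i|\le w_i\le 0$;
\item everywhere else $w_i=0$.
\end{itemize}
Thus $\mathbf w^+\le\bu^+$ and $\mathbf w^-\le\bu^-$. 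Primitivity of $\bu$ then forces $\mathbf w=\mathbf 0$ or $\mathbf w=\bu$, which gives $m$ or $m'$ respectively.

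Any Markov basis must connect $m$ to $m'$ inside this two-element fiber. Every intermediate point of the walk lies in the same fiber, so some single move must carry $m$ directly to $m'$, and that move is $\pm(\bu,-\bu)$. Hence every Markov basis contains each Graver element up to sign.

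\emph{Conclusion.} By Step 2, $\gr(I_{\Lambda(A)})$ is a Markov basis. By Step 3, it is contained in every Markov basis. It is therefore the unique minimal Markov basis, which is the statement of Theorem~\ref{ThmGraver}. The step I expect to need the most care is the coordinatewise comparison in Step 3, since it is exactly what turns membership in the fiber into conformality of $\mathbf w$ with $\bu$.
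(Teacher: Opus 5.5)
Your proof is correct. The paper gives no proof of this statement. It cites \cite[Theorem~7.1]{sturmfels1996grobner}, and the only supporting reasoning it records is the remark just after the theorem: every binomial of $I_{\Lambda(A)}$ is a lift of one in $I_A$, and primitivity is preserved under lifting. That remark is your Step~1.

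Your argument is essentially the standard proof behind the citation. Step~2 is the general fact, valid for any integer matrix, that conformal decomposition into primitive elements makes the Graver basis a Markov basis. The content specific to Lawrence liftings is Steps~1 and~3.

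The textbook version phrases Step~3 through monomial divisibility. Suppose a primitive binomial were missing from a minimal generating set. Then some other generator would have a term dividing $\bx^{\bu^+}\by^{\bu^-}$. Because the $\by$-block mirrors the $\bx$-block, divisibility of that single term already forces the generator's exponent vector to be conformal to $\pm\bu$, and primitivity then makes it $\pm(\bu,-\bu)$, a contradiction.

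You instead compute the whole fiber of $\Lambda(A)$ through $(\bu^+,\bu^-)$ and show it is exactly $\{(\bu^+,\bu^-),(\bu^-,\bu^+)\}$. This is the same observation, expressed in the language of moves and fibers that the paper uses, and it makes indispensability immediate.

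The cited theorem says more than your proof: the Graver basis of $\Lambda(A)$ also coincides with its universal Gr\"obner basis and with every reduced Gr\"obner basis. The paper never uses this. Your one external input, the finiteness of the Graver basis (needed because the paper's definition of a Markov basis requires a finite set), is standard.
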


Let $A$ be a $d\times n$ integer matrix. Given a primitive binomial $\bx^{\bu^+} - \bx^{\bu^-} \in I_A$, we can find a primitive binomial in $I_{\Lambda(A)}$ via the following lifting operation. Let $I_{\Lambda(A)} \subset k[\bx, \by]$ where the variables $\bx$ correspond to the first $n$ columns of $\Lambda(A)$ and the variables $\by$ correspond to the last $n$ columns. The \emph{lift} of $\bx^{\bu^+} - \bx^{\bu^-}$ to $I_{\Lambda(A)}$ is the binomial $\bx^{\bu^+}\by^{\bu^-} - \bx^{\bu^-}\by^{\bu^+}$. On the level of integer vectors, this says that $\bu^+ - \bu^-$ is primitive for $A$ if and only if the move 
\begin{equation}\label{eq: move lifting}
\begin{pmatrix}
    \bu^+ \\
    \bu^-
\end{pmatrix} - \begin{pmatrix}
    \bu^- \\
    \bu^+
\end{pmatrix}
\end{equation}
is primitive for $\Lambda(A)$. It is not difficult to check that all binomials in $I_{\Lambda(A)}$ are lifts of binomials in $I_A$ and that a binomial in $I_A$ is primitive if and only if its lift is primitive for $I_{\Lambda(A)}$. Applying this to our setting, we see that in order to find the Markov basis for $\Lambda(B_\Delta)$, it suffices to compute the primitive moves of $B_\Delta$.

Since the vector of all ones belongs to the rowspan of $B_\Delta$, we see that if $\bu^+ - \bu^- \in \ker(B_\Delta)$, then the sum of the entries of $\bu^+$ is equal to the sum of the entries of $\bu^-$. In other words, the toric ideal $I_{B_\Delta}$ is homogeneous. In light of Theorem \ref{thm: fundamental theorem of Markov bases}, we define the \emph{degree} of the Markov move $\bu^+ - \bu^-$ to be the sum of the entries of $\bu^+$ (or equivalently $\bu^-$); this is equal to the total degree of the corresponding binomial in $I_{B_\Delta}$. For pair of (possibly non-distinct) $i,j \in \{1,\dots,\Delta\}$, let $\be_{ij}$ denote the $\Delta \times \Delta$ matrix whose $ij$ and $ji$ entries are $1$ and all of whose other entries are $0$. Equivalently, we may think of $\be_{ij}$ as a standard basis vector in $\mathbb{R}^{\binom{\Delta+1}{2}}$.

\begin{example}
    Consider the graph $G$ pictured in Figure \ref{fig: example FR}. We computed its joint degree matrix $J$ in Example \ref{ex: jdm} and would now like to apply a Markov move to it and interpret that move graphically. Since the maximum degree of $G$ is $4$, we consider the matrix $\Lambda(B_4)$. The kernel of $B_4$ is one-dimensional and spanned by
    \[
    \mathbf{m} = \be_{13} + \be_{23} + \be_{24} - \be_{14} - \be_{22} - \be_{33}.
    \]
    Thus the kernel of $\Lambda(B_4)$ is spanned by $\begin{bmatrix} \mathbf{m} \\ -\mathbf{m} \end{bmatrix}$. Viewing $\mathbf{m}$ as a symmetric matrix, we apply this move to the JDM of $G$ by
    \[
    J + \mathbf{m} = \begin{pmatrix}
        0 & 1 & 0 & 1 \\
        1 & 1 & 2 & 1 \\
        0 & 2 & 1 & 2 \\
        1 & 1 & 2 & 0
    \end{pmatrix} + \begin{pmatrix}
        0 & 0 & 1 & -1 \\
        0 & -1 & 1 & 1 \\
        1 & 1 & -1 & 0 \\
        -1 & 1 & 0 & 0
    \end{pmatrix} = \begin{pmatrix}
        0 & 1 & 1 & 0 \\
        1 & 0 & 3 & 2 \\
        1 & 3 & 0 & 2 \\
        0 & 2 & 2 & 0
    \end{pmatrix}.
    \]
    Note that in this case, the resulting matrix satisfies the conditions of Theorem \ref{thm: characterization of JDMs} and is the JDM of a graph. On the level of the graph $G$, applying this move corresponds to replacing three edges with endpoint degrees $\{1,4\}, \{2,2\}$ and $\{3,3\}$ with three edges of endpoint degrees $\{1,3\}, \{2,3\}$ and $\{2,4\}.$ Figure \ref{fig: markov move application} depicts a graph obtained from $G$ by performing this Markov move.
\end{example}

\begin{figure}
\centering
    \begin{tikzpicture}[scale=.95,
    vertex/.style={circle, draw, thick, inner sep=2pt},
    edge/.style={ultra thick},
    special_vertex/.style={circle, draw=blue, thick, fill=blue, inner sep=2pt},
    special_edge/.style={ultra thick, purple}
]

\node[vertex] (a1) at (0,0) {};
\node[vertex] (a2) at (0,2) {};
\node[vertex] (a3) at (2,0) {};
\node[vertex] (a4) at (2,2) {};
\node[vertex] (a5) at (4,0) {};
\node[vertex] (a6) at (4,2) {};
\node[vertex] (a7) at (6,0) {};
\node[vertex] (a8) at (6,2) {};

\draw[edge] (a1) -- (a3);
\draw[edge] (a1) -- (a4);
\draw[special_edge] (a2) -- (a4);
\draw[edge] (a3) -- (a4);
\draw[special_edge] (a3) -- (a5);
\draw[edge] (a4) -- (a5);
\draw[edge] (a5) -- (a6);
\draw[special_edge] (a6) -- (a7);
\draw[edge] (a7) -- (a8);

\draw[ultra thick, ->] (7,1) -- (9,1);

\node[vertex] (b1) at (10,0) {};
\node[vertex] (b2) at (10,2) {};
\node[vertex] (b3) at (12,0) {};
\node[vertex] (b4) at (12,2) {};
\node[vertex] (b5) at (14,0) {};
\node[vertex] (b6) at (14,2) {};
\node[vertex] (b7) at (16,0) {};
\node[vertex] (b8) at (16,2) {};

\draw[edge] (b1) -- (b3);
\draw[edge] (b1) -- (b4);
\draw[special_edge] (b2) -- (b3);
\draw[edge] (b3) -- (b4);
\draw[special_edge] (b5) -- (b7);
\draw[edge] (b4) -- (b5);
\draw[edge] (b5) -- (b6);
\draw[special_edge] (b6) -- (b4);
\draw[edge] (b7) -- (b8);
\end{tikzpicture}

\caption{The Markov move $\mathbf{m}$ applied to a graph.} \label{fig: markov move application}    
\end{figure}

When applying a Markov move to a given graph, it may be necessary to create a multigraph with parallel edges or self-loops. However, since the JDM of the multigraph created must satsify the conditions of Theorem \ref{thm: characterization of JDMs}, it is possible to apply transpositions to the resulting multigraph in order to obtain a (simple) graph.

Graver bases are powerful tools for solving integer linear programs \cite{schrijver1998theory}, as well as certain nonlinear integer programs. Specifically, applying Theorem \ref{ThmGraver} and the theory of Graver bases to our setting yields a method for solving optimization problems on JDMs constrained by a given degree and curvature sequence. A promising direction for future research is leveraging this framework to study extremal graphs with prescribed degree and curvature sequences.

\section{Degree growth of Markov moves}\label{sec:DegreeDiverges}

In this section, we give a family of primitive moves for $B_\Delta$ for even values of $\Delta$ whose degree grows on the order of $\Delta^2$. Since the unique minimal Markov basis for the Lawrence lifting $\Lambda(B_\Delta)$ consists of the lifts of Graver basis elements of $B_\Delta$, this construction shows that the degree of minimal Markov basis elements for $\Lambda(B_\Delta)$ grows quadratically with $\Delta$.

Let $\Delta \geq 6$ be even. We will define a Markov move for $B_\Delta$, $\bu_\Delta = \bu_\Delta^+ - \bu_\Delta^-$ where $\bu_\Delta^+$ and $\bu_\Delta^-$ are nonnegative integer vectors with disjoint support. In order to define $\bu_\Delta$, we require the following coefficients. For each $i$ between $3$ and $\Delta-2$, we define
\[
a_i = \begin{cases}
    \Delta - i - 1 & \text{ if $i$ odd, $i \leq \frac{\Delta}{2}$} \\
    i -2 & \text{ if $i$ even, $i \leq \frac{\Delta}{2}$} \\
    a_{\Delta - i + 1} & \text{ if } i > \frac{\Delta}{2} \\
\end{cases}
\]
Using these, we define
\begin{align}
    \bu_\Delta^+ &= \be_{2,2} + \sum_{i=3}^{\Delta-2} a_i \be_{i,i} + \be_{\Delta-1,\Delta-1} + (\Delta-3) \be_{1,\Delta} \text{ and } \label{eq:MoveDefinition}\\
    \bu_\Delta^- &= \be_{1,3} + \sum_{i=3}^{\Delta-2} a_i \be_{i-2,i+2} + \be_{\Delta-2,\Delta} + (\Delta-3)\be_{3, \Delta-2}. \nonumber
\end{align}
Finally, let $\bu_\Delta = \bu_\Delta^+ - \bu_\Delta^-.$ When $\Delta$ has been fixed, we simply denote $\bu_\Delta$ by $\bu$. 

\begin{example}
    Consider the case where $\Delta = 10$. The relevant coefficients $a_i$ are $a_3 = 6$, $a_4 = 2$, $a_5 = 4$, $a_6 = 4$, $a_7 = 2$ and $a_8 = 6$. The Markov move $\bu_{10}$ is $\bu_{10}^+ - \bu_{10}^-$ where
    \begin{align*}
        \bu_{10}^+ &= \be_{2,2} + 6\be_{3,3} + 2\be_{4,4} + 4\be_{5,5} + 4\be_{6,6} + 2\be_{7,7} + 6\be_{8,8} + \be_{9,9} + 7\be_{1,10}, \text{ and} \\
        \bu_{10}^- &= \be_{1,3} + 6\be_{1,5} + 2\be_{2,6} + 4\be_{3,7} + 4\be_{4,8} + 2\be_{5,9} + 6 \be_{6,10} + \be_{8,10} + 7 \be_{3,8}.
    \end{align*}
\end{example}

For any Markov move $\bv^+ - \bv^-$, the \emph{multiplicity} of an index $i$ is the number of times $i$ appears as an index in $\bv^+$ (or equivalently, $\bv^-$). In other words, it is $v^+_{ii}+\sum_{j=1}^\Delta v^+_{ij}$. Similarly, the multiplicity of the sum $\kappa$ is $\sum_{\substack{i + j = \kappa , \ i \leq j}} v^+_{ij}$.
In this section, we will show that $\bu$ is a primitive Markov move for $B_\Delta$. To achieve this, we will first show that $\bu$ is in the kernel of $B_\Delta$; in other words, that the indices and their sums that appear in $\bu^+$ also appear in $\bu^-$ with the same multiplicities. Then we show that any move whose support is contained in $\supp(\bu)$ must either be the $0$-vector or have support equal to \emph{all} of $\supp(\bu)$.

\begin{lemma}\label{lem:QuadtricMoveCorrectness}
    Let $\Delta \geq 6$ be even and let $\bu_\Delta = \bu_\Delta^+ - \bu_\Delta^-$ as defined in Equation \ref{eq:MoveDefinition}. Then $\bu_\Delta$ belongs to the kernel of $B_\Delta$.
\end{lemma}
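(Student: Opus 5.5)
The plan is to verify directly that $B_\Delta \bu_\Delta = \mathbf{0}$. By the definition of $B_\Delta$, this is the same as checking two families of equalities. First, every sum $\kappa$ must have the same multiplicity in $\bu^+$ and $\bu^-$. Second, every index $j\in\{1,\dots,\Delta\}$ must have the same multiplicity in $\bu^+$ and $\bu^-$, where a diagonal term $\be_{i,i}$ contributes $2$ to the multiplicity of $i$.

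The sums are easy, because the terms of $\bu^+$ and $\bu^-$ in \eqref{eq:MoveDefinition} pair off with equal coefficients and equal index sums:
\begin{itemize}
\item $\be_{2,2}$ and $\be_{1,3}$ both have sum $4$;
\item $a_i\be_{i,i}$ and $a_i\be_{i-2,i+2}$ both have sum $2i$;
\item $\be_{\Delta-1,\Delta-1}$ and $\be_{\Delta-2,\Delta}$ both have sum $2\Delta-2$;
\item $(\Delta-3)\be_{1,\Delta}$ and $(\Delta-3)\be_{3,\Delta-2}$ both have sum $\Delta+1$.
\end{itemize}
So the curvature rows of $B_\Delta$ vanish on $\bu$ term by term.

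For the index multiplicities, I first record the multiplicities in $\bu^+$:
\begin{itemize}
\item index $1$ and index $\Delta$ each appear $\Delta-3$ times;
\item index $2$ and index $\Delta-1$ each appear $2$ times;
\item each index $j$ with $3\le j\le \Delta-2$ appears $2a_j$ times.
\end{itemize}
In $\bu^-$, index $j$ receives $a_{j+2}$ when $3\le j+2\le\Delta-2$ and $a_{j-2}$ when $3\le j-2\le \Delta-2$. It also receives the boundary contributions: $+1$ for $j=1$, $+1+(\Delta-3)$ for $j=3$, $+1+(\Delta-3)$ for $j=\Delta-2$, and $+1$ for $j=\Delta$.

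The boundary indices then reduce to evaluating the $a_i$:
\begin{itemize}
\item index $1$: $1+a_3=\Delta-3$;
\item index $\Delta$: $1+a_{\Delta-2}=1+a_3$;
\item index $2$: $a_4=2$;
\item index $\Delta-1$: $a_{\Delta-3}=a_4=2$;
\item index $3$: $1+(\Delta-3)+a_5=2a_3=2\Delta-8$, which needs $a_5=\Delta-6$;
\item index $\Delta-2$: this follows from the index-$3$ case by the symmetry $a_i=a_{\Delta+1-i}$.
\end{itemize}

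The remaining indices $4\le j\le\Delta-3$ require $2a_j=a_{j-2}+a_{j+2}$. This says that $a$ is affine along each parity class of indices.

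The main obstacle is the case analysis near the middle $j\approx \Delta/2$, where $j-2$ and $j+2$ fall on opposite sides of $\Delta/2$ and different formulas for $a$ apply. On each side separately the identity is immediate, since $a_i=\Delta-i-1$ (odd $i$) and $a_i=i-2$ (even $i$) are affine in $i$ within a parity class. To cross the middle, I would use the reflection $i\mapsto \Delta+1-i$. Since $\Delta$ is even, this reflection swaps parities. So for $i>\Delta/2$, the value $a_i$ equals $i-3$ if $i$ is odd and $\Delta-i$ if $i$ is even. I would check that this piecewise description glues into a single affine function across $\Delta/2$ along each parity class, or at least across the few indices $j\in\{\Delta/2-1,\dots,\Delta/2+2\}$ where gluing matters. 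I would do this by splitting on $\Delta\equiv 0$ or $2 \pmod 4$ and checking those few indices directly.

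Finally, the small cases $\Delta=6$ and $\Delta=8$ would be checked by hand, for instance the value $a_5$ used at index $3$, since there the general ranges degenerate.
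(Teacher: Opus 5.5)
Your overall strategy is the same as the paper's, and it is sound. You match the curvature rows term by term, check the boundary indices $1,2,3$ and their mirror images, and reduce the remaining indices to $2a_j=a_{j-2}+a_{j+2}$. Your sum and boundary computations are all correct.

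The problem is at the step you identify as the crux. Your claim that for $i>\Delta/2$ one has $a_i=i-3$ ($i$ odd) and $a_i=\Delta-i$ ($i$ even) is wrong. For $\Delta=10$ it gives $a_7=4$ and $a_8=2$, whereas the definition gives $a_7=a_4=2$ and $a_8=a_3=6$. For $\Delta\ge 8$ it also contradicts the relation $a_{\Delta-2}=a_3$ that you use at index $\Delta$, since your formula gives $a_{\Delta-2}=2$. With your formulas the gluing check genuinely fails: for $\Delta=10$, $a_3+a_7=10\neq 8=2a_5$. So the plan as written would not close.

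The correct computation is as follows. For $i>\Delta/2$, the index $m=\Delta+1-i$ has the opposite parity to $i$. For odd $i$ this gives $a_i=m-2=\Delta-i-1$, and for even $i$ it gives $a_i=\Delta-m-1=i-2$. Hence $a_i=\Delta-i-1$ for \emph{every} odd $i$ and $a_i=i-2$ for \emph{every} even $i$ in $[3,\Delta-2]$. So $a$ is affine on each parity class globally: there is nothing to glue, and no split on $\Delta \bmod 4$ is needed. The paper's subcases $i=\frac{\Delta}{2}-1$ and $i=\frac{\Delta}{2}$ verify this piecewise; once corrected, your route is slightly cleaner than the paper's.

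One further point needs tightening. At $j=4$ and $j=\Delta-3$, one neighbour falls outside $[3,\Delta-2]$. By your own description of $\mathbf{u}^-$, index $j$ then receives only $a_6$ (respectively $a_{\Delta-5}$). So the required identities are $2a_4=a_6$ and $2a_{\Delta-3}=a_{\Delta-5}$; the paper treats index $4$ as a separate case. Both identities hold, and they fit your recurrence once you note that the global formulas extend to $a_2=0=a_{\Delta-1}$. You should state this explicitly rather than write $a_{j-2}+a_{j+2}$ with an undefined term.
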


\begin{proof}
    Since $\Delta$ is fixed, we omit the $\Delta$ subscript and write $\bu = \bu_\Delta$, $\bu^+ = \bu_\Delta^+$ and $\bu^- = \bu^-_\Delta$ to simplify notation.
    First note that the sums of indices that appear in $\bu^+$ and $\bu^-$ are exactly $\Delta+1$ and the even numbers $4, 6, \dots, 2\Delta-2$. They appear with the same multiplicity by construction. It remains to show that each index $1,\dots,\Delta$ appears with the same multiplicity in $\bu^+$ and $\bu^-$. Since $\bu$ is invariant under the permutation of indices which sends each  $i$ to $\Delta-i+1$, it suffices to show this is the case for $1 \leq i \leq \frac{\Delta}{2}$. We consider the special cases, $i =1 ,\dots, 4$ and the general case where $5 \leq i \leq \frac{\Delta}{2}$.

    The index $1$ appears in $\bu^+$ only in the pair $(1,\Delta)$ and the coefficient of $\be_{1,\Delta}$ is $\Delta-3$. It appears in $\bu^-$ in the pairs $(1,3)$, which has coefficient $1$, and $(1,5)$, which has coefficient $a_3 = \Delta - 4$. So the multiplicity of index $1$ in $\bu^-$ is $1 + \Delta - 4 = \Delta -3$, as needed.

    The index $2$ appears in $\bu^+$ only in the pair $(2,2)$ with coefficient $1$; hence its multiplicity in $\bu^+$ is $2$. We note here that each instance of $2$ in the pair $(2,2)$ counts separately towards its multiplicity. It appears in $\bu^-$ only in the pair $(2,6)$, which appears with coefficient $a_4 = 4-2=2$, as needed.

    The index $3$ appears in $\bu^+$ only in the pair $(3,3)$ with coefficient $a_3 = \Delta - 4$. So its multiplicity in $\bu^+$ is $2a_3 = 2\Delta - 8$. It appears in three pairs in $\bu^-$: the pair $(1,3)$ with coefficient $1$, the pair $(3,7)$ with coefficient $a_5 = \Delta - 6$ and the pair $(3,\Delta-2)$ with coefficient $\Delta - 3$. So the multiplicity of the index $3$ in $\bu^-$ is $1 + \Delta - 6 + \Delta - 3 = 2\Delta - 8$, as needed.

    The index $4$ appears in $\bu^+$ only in the pair $(4,4)$ with coefficient $a_4 = 2$. So its multiplicity in $\bu^+$ is $4$. It appears in $\bu^-$ only in the pair $(4,8)$ with coefficient $a_6 = 4$, and hence its multiplicity in $\bu^-$ is also $4$.

    Finally, let $i$ be such that $5 \leq i \leq \frac{\Delta}{2}$. The index $i$ appears in $\bu^+$ with multiplicity $2a_i$ and in $\bu^-$ with multiplicity $a_{i-2} + a_{i+2}$. We claim that for every such $i$, $2a_i = a_{i-2}+a_{i+2}$. There are several cases.

    First suppose $i \leq \frac{\Delta}{2} - 2$. If $i$ is odd, then $2a_i = 2 \Delta - 2i - 2$ and $a_{i-2} + a_{i+2} = \Delta - (i-2) - 1 + \Delta - (i+2) - 1 = 2\Delta - 2i - 2.$
    If $i$ is even, then $2a_i = 2i -4$ and $a_{i-2} + a_{i+2} = i - 4 + i = 2i-4.$

    Next, consider the case where $i = \frac{\Delta}{2} - 1$. If $\frac{\Delta}{2} - 1$ is odd, then 
    \begin{align*}
    2a_{\frac{\Delta}{2} - 1} &= 2 \Big( \Delta - \big(\frac{\Delta}{2} - 1\big) - 1 \Big) = \Delta \text{ and } \\
    a_{\frac{\Delta}{2} - 3} + a_{\frac{\Delta}{2} + 1} & = a_{\frac{\Delta}{2} - 3} + a_{\frac{\Delta}{2}} \\
    & = \Delta - \big( \frac{\Delta}{2} - 3 \big) - 1 + \frac{\Delta}{2} - 2 \\
    & = \Delta.
    \end{align*}
    Similarly if $\frac{\Delta}{2} - 1$ is even, then $2a_{\frac{\Delta}{2} - 1} = 2\big(\frac{\Delta}{2} - 3) = \Delta - 6$. In this case, $a_{\frac{\Delta}{2} - 3} + a_{\frac{\Delta}{2} + 1} = \frac{\Delta}{2} - 5 + \Delta - \frac{\Delta}{2} - 1 = \Delta - 6$, as needed.

    Finally we consider the case $i = \frac{\Delta}{2}$. If $\frac{\Delta}{2}$ is odd, then $2a_{\frac{\Delta}{2}} = 2(\frac{\Delta}{2}-1) = \Delta - 2$. We have
    \[a_{\frac{\Delta}{2}-2} + a_{\frac{\Delta}{2}+2} = a_{\frac{\Delta}{2}-2} + a_{\frac{\Delta}{2}-1} = \Delta - (\frac{\Delta}{2} - 2) - 1 + (\frac{\Delta}{2} -1) - 2 = \Delta - 2.\]
    If $\frac{\Delta}{2}$ is even, then $2a_{\frac{\Delta}{2}} = \Delta-4$ and $a_{\frac{\Delta}{2}-2} + a_{\frac{\Delta}{2}+2} = \frac{\Delta}{2} - 4 + \Delta - (\frac{\Delta}{2}-1) -1 = \Delta-4.$

    By the invariance of $\bu$ under the transformation of indices sending $i$ to $\Delta-i+1$, this exhausts all cases. Since the multiplicity of each index and each sum of pairs of indices appearing in $\bu^+$ are the same as those in $\bu^-$, we conclude that $\bu$ belongs to the kernel of $B_\Delta$.
\end{proof}

Now we must show that $\bu$ is a \emph{primitive} element of $\ker(B_\Delta)$. In other words, we must show that there is no other nonzero $\bv \in \ker(B_\Delta)$ such that $\bu^+ - \bv^+ \geq \mathbf{0}$ and $\bu^- - \bv^- \geq \mathbf{0}$. To achieve this, we first show that for any such $\bv$, we must have $\supp(\bv^+) = \supp(\bu^+)$ and $\supp(\bv^-) = \supp(\bu^-)$. 

\begin{lemma}\label{lem:MoveWithout22}
    Let $\bv \in \ker_\ZZ(B_\Delta)$ be such that $\supp(\bv^+) \subset \supp(\bu_\Delta^+)$ and $\supp(\bv^-) \subset \supp(\bu_\Delta^-)$. Suppose further that $(2,2)$ is not in the support of $\bv^+$. Then $\bv$ is the zero vector.
\end{lemma}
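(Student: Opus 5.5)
The plan is to translate the hypotheses into a small linear system in the coefficients of $\bv$ and show that the system forces every coefficient to vanish. Since $\supp(\bv^+)\subset\supp(\bu_\Delta^+)$ and $\supp(\bv^-)\subset\supp(\bu_\Delta^-)$, I will write
\[
\bv^+ = x_{2}\be_{2,2}+\sum_{i=3}^{\Delta-2}x_i\be_{i,i}+x_{\Delta-1}\be_{\Delta-1,\Delta-1}+y\,\be_{1,\Delta},
\]
\[
\bv^- = z\,\be_{1,3}+\sum_{i=3}^{\Delta-2}w_i\be_{i-2,i+2}+z'\be_{\Delta-2,\Delta}+t\,\be_{3,\Delta-2},
\]
with all coefficients nonnegative integers and $x_2=0$ by hypothesis. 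The condition $\bv\in\ker_\ZZ(B_\Delta)$ says that every index and every pair-sum has the same multiplicity in $\bv^+$ and $\bv^-$.

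\textbf{Step 1: the sum rows.} As in the proof of Lemma \ref{lem:QuadtricMoveCorrectness}, each sum occurs exactly once on each side.
\begin{itemize}
\item Sum $4$ gives $x_2=z$, so $z=0$.
\item Sum $2i$ gives $x_i=w_i$ for $3\le i\le\Delta-2$.
\item Sum $2\Delta-2$ gives $x_{\Delta-1}=z'$.
\item Sum $\Delta+1$ gives $y=t$.
\end{itemize}
This lets me eliminate $w_i$, $z'$ and $t$.

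\textbf{Step 2: the even indices.} The index rows decouple by parity, because each pair $(i-2,i+2)$ links indices of equal parity. The general index equation, for $5\le i\le\Delta-4$, reads $2x_i=w_{i-2}+w_{i+2}=x_{i-2}+x_{i+2}$.
\begin{itemize}
\item Index $2$ gives $2x_2=w_4$, so $x_4=w_4=0$.
\item Index $4$ gives $2x_4=w_6$, so $x_6=0$.
\item The general equation then propagates zeros up the even indices, giving $x_i=w_i=0$ for all even $i\le\Delta-2$.
\item Index $\Delta-2$ reads $2x_{\Delta-2}=w_{\Delta-4}+z'+t$. Its left side is now $0$, so $z'=t=0$ by nonnegativity. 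Hence $y=0$ and $x_{\Delta-1}=0$.
\end{itemize}

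\textbf{Step 3: the odd indices.}
\begin{itemize}
\item Index $1$ gives $y=z+w_3$, so $w_3=x_3=0$.
\item Index $3$ gives $2x_3=z+w_5+t$, so $w_5=0$.
\item The same recurrence $2x_i=x_{i-2}+x_{i+2}$, started from two zeros, propagates along the odd indices. This yields $x_i=w_i=0$ for all odd $i$.
\end{itemize}
Every coefficient is then zero, so $\bv=0$.

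The main obstacle is bookkeeping near the boundary indices $1,2,3,4,\Delta-3,\Delta-2,\Delta-1,\Delta$. There the special pairs $(1,3)$, $(1,\Delta)$, $(3,\Delta-2)$, $(\Delta-2,\Delta)$ and $(\Delta-1,\Delta-1)$ modify the generic recurrence, so I need to list exactly which pairs contain each of these indices. Nonnegativity is essential for concluding that $z'=t=0$ from a sum of nonnegative terms equal to zero. I would organize the argument by listing the index rows explicitly and running the two parity chains, checking the endpoints individually, as was done in Lemma \ref{lem:QuadtricMoveCorrectness}.
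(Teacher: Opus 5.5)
Your proof is correct and is essentially the paper's argument written with explicit coefficients. The paper also matches pair-sums one-to-one between $\supp(\bu_\Delta^+)$ and $\supp(\bu_\Delta^-)$, runs the even chain from $(2,2)$ up to index $\Delta-2$ to kill $(\Delta-2,\Delta)$ and $(3,\Delta-2)$ (hence $(\Delta-1,\Delta-1)$ and $(1,\Delta)$), and then runs the odd chain starting from index $1$. The only boundary case to watch is $\Delta=6$: there index $4$ equals $\Delta-2$, so your ``index $4$'' step is replaced by the index-$(\Delta-2)$ equation $2x_4=z'+t$, and your endpoint check would handle this.
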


\begin{proof}
    We again omit the $\Delta$ subscript from $\bu, \bu^+$ and $\bu^-$ to streamline our notation.
    First note that the sums of indices in $\supp(\bu^+)$ are all distinct and the same is true of the indices in $\supp(\bu^-)$. Thus if any pair $(i,j) \in \supp(\bu^+)$ does not belong to $\supp(\bv^+)$, then the corresponding pair $(k, \ell) \in \supp(\bu^-)$ with $k + \ell = i +j$ does not belong to $\supp(\bv^-)$ and vice versa.
    
    Now suppose that $v^+_{22} = 0$. So by the above observation, $v^-_{13} = 0$ as well. Then there are no pairs in the support of $\bv^-$ that contain the index $2$; in particular, from the structure of $u^-$ given in Equation \ref{eq:MoveDefinition}, we see that $v^-_{26} = 0$ and $v^+_{44} = 0$. Hence there are also no pairs in the support of $\bv^-$ containing the index $4$, and we conclude that $v^-_{48} = v^+_{66} = 0$. Proceeding in this manner, we see that for all even $i \geq 4$, $v^+_{ii} = 0$ and $v^-_{i-2,i+2} = 0$. In particular, $v^+_{\Delta-2,\Delta-2} = 0$, so there are no pairs in $\supp(\bv^-)$ with the index $\Delta-2$. We conclude that $v^-_{\Delta-2,\Delta} = v^-_{3,\Delta-2} = 0$. Hence $v^+_{\Delta-1,\Delta-1} = v^+_{1,\Delta} = 0$. 

    Now we see that there are no pairs in $\supp(\bv^-)$ including the index $1$, so in particular, $v^-_{15} = 0$ and $v^+_{33} = 0$. This excludes $3$ from the indices of $\supp(\bv^-)$, so $v^-_{37} = v^+_{55} = 0$. Proceeding in the same way, we see that for all odd $i \geq 3$, $v^+_{ii} = v^-_{i-2,i+2} = 0$. So we have found that all of the possibly nonzero coordinates of $\bv^+$ and $\bv^-$ are in fact equal to $0$, so $\bv^+ = \bv^- = \mathbf{0}$.
\end{proof}

This lemma finally allows us to prove that $\bu_\Delta$ is a primitive Markov move.

\begin{theorem}\label{thm:QuadraticMovePrimativity}
    Let $\Delta \geq 6$ be even and let $\bu_\Delta = \bu_\Delta^+ - \bu_\Delta^-$ as defined in Equation \ref{eq:MoveDefinition}. Then $\bu_\Delta$ is a primitive Markov move for $B_\Delta$.
\end{theorem}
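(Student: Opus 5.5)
Lemma \ref{lem:QuadtricMoveCorrectness} already gives $\bu_\Delta \in \ker(B_\Delta)$, so what remains is primitivity. I will argue by contradiction. Suppose $\bv \in \ker_\ZZ(B_\Delta)$ is nonzero, $\bv \neq \bu$, and satisfies $\bu^+ - \bv^+ \geq \mathbf{0}$ and $\bu^- - \bv^- \geq \mathbf{0}$. Then $\supp(\bv^+) \subseteq \supp(\bu^+)$ and $\supp(\bv^-) \subseteq \supp(\bu^-)$. Lemma \ref{lem:MoveWithout22} then forces $v^+_{22} \neq 0$, and since $u^+_{22} = 1$ this gives $v^+_{22} = 1$.

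Next I pin down $\bv$ exactly, by propagating along the same chain of dependencies used in Lemma \ref{lem:MoveWithout22}, but now tracking the values instead of only the supports. Since the index sums in $\supp(\bu^+)$ are pairwise distinct, and likewise in $\supp(\bu^-)$, each sum constraint matches one coordinate of $\bv^+$ with exactly one coordinate of $\bv^-$. So the coefficient of $\be_{ii}$ in $\bv^+$ equals that of $\be_{i-2,i+2}$ in $\bv^-$, and similarly for $(2,2)\leftrightarrow(1,3)$, $(\Delta-1,\Delta-1)\leftrightarrow(\Delta-2,\Delta)$ and $(1,\Delta)\leftrightarrow(3,\Delta-2)$. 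Write $b_i$ for the common coefficient attached to $(i,i)$, $c$ for the one attached to $(\Delta-1,\Delta-1)$, and $d$ for the one attached to $(1,\Delta)$. The index-multiplicity constraints then become linear recurrences, exactly the equations verified in Lemma \ref{lem:QuadtricMoveCorrectness}:
\begin{itemize}
\item index $2$ gives $2\cdot 1 = b_4$;
\item index $4$ gives $2b_4 = b_6$;
\item index $i$ with $5 \leq i$ gives $2b_i = b_{i-2}+b_{i+2}$;
\item index $1$ gives $d = 1 + b_3$;
\item index $3$ gives $2b_3 = 1 + b_5 + d$;
\item the symmetric equations at the top end involve $c$ and $d$.
\end{itemize}

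The even-indexed chain starting from $v^+_{22}=1$ determines $b_4, b_6, \dots, b_{\Delta-2}$ uniquely, and these agree with $a_4, a_6, \ldots$. Index $\Delta-2$ and index $\Delta$ (or the sum constraints) then determine $c$ and $d$. From $d$, index $1$ gives $b_3 = d-1$, and the odd chain determines all the remaining $b_i$. Because $\bu$ itself solves this system, uniqueness forces $\bv = \bu$, which is a contradiction.

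The main obstacle is the bookkeeping: confirming that the system is really determined (triangular) once $v^+_{22}=1$ is fixed. In particular, the even and odd chains are coupled only through $c$ and $d$ at the ends, and one must check that the propagation runs through all indices without leaving a free parameter, including the middle indices near $\Delta/2$ where the definition of $a_i$ switches.

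A cleaner alternative, which I would try first, avoids this computation entirely. The same support propagation shows the kernel of $B_\Delta$ restricted to the coordinates $\supp(\bu)$ is one-dimensional: every coordinate is forced to be a rational multiple of $v^+_{22}$, with $v^+_{22}=0$ giving zero by Lemma \ref{lem:MoveWithout22}. So $\bv = t\bu$ for some rational $t$. Since $0 < v^+_{22} \leq u^+_{22} = 1$ and $\bv$ is integral, $t = 1$, hence $\bv = \bu$. This gives primitivity.
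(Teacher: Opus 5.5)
Your proof is correct, but it takes a longer route than the paper's. The paper never solves the linear system. Once $v^+_{22}=1$, it applies Lemma \ref{lem:MoveWithout22} to the complementary move $\bu-\bv=(\bu^{+}-\bv^{+})-(\bu^{-}-\bv^{-})$. Its positive and negative parts are nonnegative, with supports in $\supp(\bu^+)$ and $\supp(\bu^-)$ respectively; this is exactly where $\bv^+\le\bu^+$ and $\bv^-\le\bu^-$ are used. Its $(2,2)$ entry is $1-1=0$, so the lemma gives $\bu-\bv=\mathbf{0}$ immediately.

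Your first route instead solves the system in a fixed order: the even chain, then $c,d$ from indices $\Delta-2$ and $\Delta$, then $b_3$ from index $1$, then the odd chain. This ordering is genuinely triangular (after adjusting the generic equations for small $\Delta$ such as $\Delta=6$). It also buys a stronger conclusion: the kernel of $B_\Delta$ restricted to $\supp(\bu)$ is the line spanned by $\bu$, so $\bu$ is support-minimal (a circuit), not just primitive.

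Be careful with your ``cleaner alternative,'' though. Lemma \ref{lem:MoveWithout22} only concerns sign-compatible vectors, and its propagation uses nonnegativity at exactly one step: at index $\Delta-2$ it splits $v^-_{\Delta-2,\Delta}+v^-_{3,\Delta-2}=0$ into two zeros. Take an arbitrary kernel vector supported on $\supp(\bu)$ with vanishing $(2,2)$ entry. For it, that step only yields $c+d=0$, and you must also use the index-$\Delta$ equation, which gives $d-c=0$. So one-dimensionality does not follow from the lemma as stated; it follows from the $2\times 2$ step of your first computation. The alternative therefore does not avoid the computation. It does not need it either: the sign-compatible lemma applied to $\bu-\bv$ already closes the argument.
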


\begin{proof}
    First, Lemma \ref{lem:QuadtricMoveCorrectness} shows that $\bu_\Delta$ indeed belongs to $\ker(B_\Delta)$. To show that $\bu_\Delta$ is primitive, suppose that $\bv \in \ker_\ZZ(B_\Delta)$ such that $\bu_\Delta^+ - \bv^+ \geq \mathbf{0}$ and $\bu_\Delta^- - \bv^- \geq \mathbf{0}$.  Then $(\bu_\Delta^+ - \bv^+) - (\bu_\Delta^- -\bv^-)$ also belongs to $\ker_\ZZ(B_\Delta)$; moreover the vectors $\bu_\Delta^+ - \bv^+ $ and $ \bu_\Delta^- -\bv^-$ are nonnegative with disjoint support.

    If $v^+_{22} = 0$, then by Lemma \ref{lem:MoveWithout22}, $\bv$ is the zero vector. If $v^+_{22} = 1$, then the $(2,2)$ entry of the Markov move $(\bu_\Delta^+ - \bv^+) - (\bu_\Delta^- -\bv^-)$ is $0$. Hence by Lemma \ref{lem:MoveWithout22}, $(\bu_\Delta^+ - \bv^+) - (\bu_\Delta^- -\bv^-)$ is the zero vector, and since the supports of $\bu_\Delta^+$ and $\bu_\Delta^-$ are disjoint, we have $\bu_\Delta^+ = \bv^+$ and $\bu_\Delta^- = \bv^-$. Thus if $\bv \in \ker_\ZZ(B_\Delta)$ satisfies the conditions that $\bu_\Delta^+ - \bv^+ \geq \mathbf{0}$ and $\bu_\Delta^- - \bv^- \geq \mathbf{0}$, then $\bv$ is either the zero vector or $\bu_\Delta$. Hence $\bu_\Delta$ is primitive.
\end{proof}

Finally we consider how the degree of $\bu_\Delta$ grows with $\Delta$. Recall that the degree of the Markov move $\bu_\Delta^+ - \bu_\Delta^-$ is the coordinate sum of $\bu_\Delta^+$ (or equivalently, of $\bu_\Delta^-$). In the following theorem, we use the growth of the coefficients $a_i$ to show that the degree of $\bu_\Delta$ grows quadratically in $\Delta$. This means that the degree of elements of the unique minimal Markov basis for $\Lambda(B_\Delta)$ grows at least quadratically as well.

\begin{theorem}\label{thm:uDeltaDegree}
    For each integer $\Delta \geq 4$, there exists a primitive Markov move for $B_\Delta$ of degree $2\big(\lfloor \frac{\Delta}{2} \rfloor -1 \big)^2 + 1$.
\end{theorem}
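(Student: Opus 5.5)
The plan is to take $\bu_\Delta$ from Theorem \ref{thm:QuadraticMovePrimativity} for even $\Delta \geq 6$, handle $\Delta = 4$ directly, and obtain odd $\Delta$ by embedding $B_{\Delta-1}$ into $B_\Delta$. Note that $\lfloor \Delta/2 \rfloor$ is the same for $\Delta$ and $\Delta-1$ when $\Delta$ is odd, so the claimed degree does not change under this embedding.

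\emph{Even $\Delta\ge 6$.} By Theorem \ref{thm:QuadraticMovePrimativity}, $\bu_\Delta$ is primitive, so it remains to compute its degree, i.e.\ the coordinate sum of $\bu_\Delta^+$. From \eqref{eq:MoveDefinition} this is
\[
1 + 1 + (\Delta-3) + S', \qquad S' = \sum_{i=3}^{\Delta-2} a_i .
\]
Put $m = \Delta/2$. The map $i \mapsto \Delta - i + 1$ is a bijection from $\{3,\dots,m\}$ onto $\{m+1,\dots,\Delta-2\}$, and $a_i = a_{\Delta-i+1}$, so $S' = 2S$ with $S = \sum_{i=3}^{m} a_i$. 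The target $2(m-1)^2+1$ therefore amounts to $S = (m-1)(m-2)$. To evaluate $S$, pair each odd $i$ with $i+1$; then $a_i + a_{i+1} = (\Delta - i - 1) + (i - 1) = \Delta - 2 = 2m-2$.
\begin{itemize}
\item If $m$ is even, the indices $3,\dots,m$ split into $(m-2)/2$ such pairs, giving $S = (m-2)(m-1)$.
\item If $m$ is odd, there are $(m-3)/2$ pairs plus the leftover term $a_m = \Delta - m - 1 = m-1$, again giving $S = (m-2)(m-1)$.
\end{itemize}
As a check, $\Delta = 10$ gives $33$ both from the example and from the formula.

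\emph{$\Delta = 4$.} Here the target degree is $3$. The kernel of $B_4$ is one-dimensional and spanned by $\mathbf{m}$, whose nonzero entries are all $\pm 1$. Suppose $\bv \in \ker_\ZZ(B_4)$ satisfies $\mathbf{m}^+ - \bv^+ \geq \mathbf{0}$ and $\mathbf{m}^- - \bv^- \geq \mathbf{0}$. Then $\bv = c\,\mathbf{m}$ for some integer $c$, and the domination conditions force $c \in \{0,1\}$. Hence $\mathbf{m}$ is primitive, and its degree is $3$.

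\emph{Odd $\Delta$.} The columns of $B_{\Delta-1}$ are the pairs $ij$ with $i,j \leq \Delta-1$, and these are a subset of the columns of $B_\Delta$. On those columns, the rows of $B_\Delta$ indexed by degree $\Delta$ and by sums $2\Delta-1$ and $2\Delta$ vanish. The remaining rows restrict exactly to $B_{\Delta-1}$. So a vector supported on these columns lies in $\ker B_\Delta$ if and only if it lies in $\ker B_{\Delta-1}$.

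Primitivity also transfers. Take a primitive $\bw$ for $B_{\Delta-1}$ and any $\bv \in \ker_\ZZ(B_\Delta)$ dominated by $\bw$ in the sense of Definition \ref{def:primitive}. Then $\bv$ is supported on the same columns, hence lies in $\ker B_{\Delta-1}$, and so $\bv = \mathbf{0}$ or $\bv = \bw$. The degree is unchanged. Applying this with $\Delta - 1 \in \{4\} \cup \{6,8,\dots\}$ covers every odd $\Delta \geq 5$.

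The main obstacle is the parity bookkeeping in $\sum_i a_i$. The only other point needing care is that the embedding preserves primitivity, and that follows because domination preserves support.
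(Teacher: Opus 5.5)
Your proof is correct and follows the paper's route. You handle even $\Delta\ge 6$ by computing the degree of $\bu_\Delta$; your pairing $a_i+a_{i+1}=\Delta-2$ is just a rearrangement of the paper's observation that the $a_i$ run twice over the even numbers $2,\dots,\Delta-4$. You treat $\Delta=4$ via the one-dimensional kernel spanned by $\mathbf{m}$, and odd $\Delta$ by regarding primitive moves of $B_{\Delta-1}$ as primitive moves of $B_\Delta$. You also supply two justifications the paper only asserts: that $\mathbf{m}$ is primitive, and that this embedding preserves primitivity because domination preserves support.
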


\begin{proof}
    First, we consider the case where $\Delta$ is even. When $\Delta = 4$, the unique minimal Markov move for $A_4$ has the desired degree $3$; in fact, this move is the analogue of $\bu$ for $\Delta = 4$ since in this case, there are no coefficients $a_i$. Now consider the degree of $\bu_\Delta$ for even $\Delta \geq 6$. It is
    \begin{align*}
        2 + (\Delta - 3) + 2\sum^{\Delta-4}_{\substack{i=2 \\ i \text{ even}}} i &= \Delta - 1 + 4 \sum_{i=1}^{\frac{\Delta}{2}-2} i \\
        &= \Delta - 1 + 4 \binom{\frac{\Delta}{2} - 1}{2} \\
        &= \Delta - 1 + 2\Big(\frac{\Delta}{2}-1\Big) \Big( \frac{\Delta}{2} - 2 \Big) \\
        &= 2\Big(\frac{\Delta}{2} - 1\Big)^2 + 1.
    \end{align*}

    For $\Delta \geq 5$ odd, note that the primitive moves of $A_{\Delta-1}$ are also primitive moves for $B_\Delta$. So $\bu_{\Delta -1}$ is a primitive move for $B_\Delta$ of degree 
    \[
    2\Big(\frac{\Delta-1}{2} - 1\Big)^2 + 1 = 2\Big(\left\lfloor \frac{\Delta}{2} \right\rfloor -1 \Big)^2 + 1.
    \]
\end{proof}

\begin{corollary}
    The unique minimal Markov basis for $\Lambda(B_\Delta)$ contains a Markov move of degree $4\Big(\left\lfloor \frac{\Delta}{2} \right\rfloor -1 \Big)^2 + 2$.
\end{corollary}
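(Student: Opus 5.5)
The plan is to combine Theorem \ref{thm:uDeltaDegree} with Theorem \ref{ThmGraver} and the lifting correspondence of Equation \eqref{eq: move lifting}.

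First, for the given $\Delta \geq 4$, Theorem \ref{thm:uDeltaDegree} provides a primitive move $\bu = \bu^+ - \bu^-$ for $B_\Delta$ of degree $d = 2\big(\lfloor \frac{\Delta}{2} \rfloor -1\big)^2+1$. This means that the coordinate sum of $\bu^+$, and equally of $\bu^-$, is $d$.

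Second, we lift $\bu$ to the move
\[
\begin{pmatrix} \bu^+ \\ \bu^- \end{pmatrix} - \begin{pmatrix} \bu^- \\ \bu^+ \end{pmatrix}
\]
for $\Lambda(B_\Delta)$. As noted after Equation \eqref{eq: move lifting}, $\bu$ is primitive for $B_\Delta$ if and only if this lift is primitive for $\Lambda(B_\Delta)$. The lift therefore lies in $\gr(I_{\Lambda(B_\Delta)})$. By Theorem \ref{ThmGraver}, this Graver basis is exactly the unique minimal Markov basis of $\Lambda(B_\Delta)$, so the lifted move belongs to that basis.

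Third, we compute the degree of the lift. Its positive part is $(\bu^+,\bu^-)$, whose coordinate sum is $d+d=2d$. Equivalently, the corresponding binomial $\bx^{\bu^+}\by^{\bu^-}-\bx^{\bu^-}\by^{\bu^+}$ has total degree $2d$. This gives
\[
2d = 4\Big(\left\lfloor \tfrac{\Delta}{2}\right\rfloor -1\Big)^2+2,
\]
as claimed.

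There is no real obstacle here. The only point needing care is the meaning of degree for $\Lambda(B_\Delta)$. Its toric ideal is homogeneous, since the all-ones vector is the sum of the rows in the identity blocks, so the degree of a move is well defined as the coordinate sum of its positive part. It is also worth noting that the statement implicitly takes $\Delta \geq 4$, as Theorem \ref{thm:uDeltaDegree} does.
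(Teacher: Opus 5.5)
Your proposal is correct and follows the paper's proof. You take the primitive move from Theorem~\ref{thm:uDeltaDegree}, lift it to a Graver element of $\Lambda(B_\Delta)$ (which by Theorem~\ref{ThmGraver} lies in the unique minimal Markov basis), and note that lifting doubles the degree; your remarks on the homogeneity of $I_{\Lambda(B_\Delta)}$ and the implicit hypothesis $\Delta \geq 4$ are accurate and make explicit what the paper leaves tacit.
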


\begin{proof}
    By Theorem \cite[Theorem 7.1]{sturmfels1996grobner}, see also \cite[Proposition~4.3]{aoki2012markov} and \cite[Proposition~5.2]{aoki2012markov}, the unique minimal Markov basis for $\Lambda(B_\Delta)$ consists of the lifts of all primitive Markov moves for $B_\Delta$. The result follows from Theorem \ref{thm:uDeltaDegree} and the observation that the operation of lifting a move for $B_\Delta$ to one for $\Lambda(B_\Delta)$ doubles the degree.
\end{proof}

In fact, computational evidence suggests that the degrees of primitive moves for $B_\Delta$ may grow \emph{exponentially} with $\Delta$. Proving this, and finding a formula for the maximum degree of these primitive moves, is a possible direction for future research.

\section{Lattice basis}\label{Sec:latt_basis}

As we showed in Section \ref{sec:DegreeDiverges}, the complexity of Markov bases diverges quickly as $\Delta$ goes to infinity. A complete description of the Markov moves for arbitrary $\Delta$ seems far-off. For this reason, in this section, we focus instead on finding lattice bases for our matrix $B_\Delta$. Every Markov move is a linear combination of lattice basis elements, and knowledge of this lattice basis will allow us to use the actor-critic reinforcement learning method of \cite{ActorCriticMarkovBases_Petrovic} to explore the fiber of a given JDM.

\begin{definition}
    Let $A \in \mathbb{Z}^{m \times n}$ be an integer matrix, and consider the lattice
\[
L = \ker_{\mathbb{Z}}(A)
  = \{\, \mathbf{x} \in \mathbb{Z}^n \mid A\mathbf{x} = \mathbf{0} \,\}.
\]
A set of integer vectors 
\[
B = \{\mathbf{b}_1, \mathbf{b}_2, \dots, \mathbf{b}_k\} \subset \mathbb{Z}^n
\]
is called a \emph{lattice basis} for $L$ if $B$ is a basis for $L$ as a free $\mathbb{Z}$-module.
\end{definition}

Our goal is to find a lattice basis for the integer kernel of $B_\Delta$. In Proposition 4.12 of \cite{roost2024exploring}, the authors give a formula for the rank of $B_\Delta$ from which we can determine the dimension of $\ker_\mathbb{Z}(B_\Delta)$.

\begin{proposition}[\cite{roost2024exploring}, Proposition 4.12]\label{prop: rank of Adelta}\label{prop:ADeltaRank} For each $\Delta \geq 3$
\[
\rnk(\Lambda(B_{\Delta}))= \binom{\Delta+1}{2}+3\Delta-3 =d-2.
\]
\end{proposition}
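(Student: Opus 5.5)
The plan is to reduce the statement to a rank computation for $B_\Delta$ itself and then to compute the left kernel of $B_\Delta$ explicitly. Write $n=\binom{\Delta+1}{2}$ for the number of columns of $B_\Delta$. I read $d$ in the statement as the number of rows of $\Lambda(B_\Delta)$, namely $d=(3\Delta-1)+n$; this matches the stated rank $d-2=n+3\Delta-3$.

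\textbf{Step 1 (reduction).} Subtract the first block of $n$ columns of $\Lambda(B_\Delta)$ from the second block. This is a unimodular column operation, and it turns $\Lambda(B_\Delta)$ into
\[
\begin{pmatrix} B_\Delta & -B_\Delta \\ I_n & \mathbf{0} \end{pmatrix}.
\]
The identity block in the lower left lets us clear the upper-left block by row operations, so the rank equals $n+\rnk(B_\Delta)$. It therefore suffices to show that $\rnk(B_\Delta)=3\Delta-3$. Since $B_\Delta$ has $3\Delta-1$ rows, this is equivalent to showing that its left kernel has dimension exactly $2$.

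\textbf{Step 2 (two left-kernel vectors).} Let $y\in\mathbb{R}^{\Delta}$ weight the degree rows and $z\in\mathbb{R}^{2\Delta-1}$ weight the curvature rows, indexed by $\kappa=2,\dots,2\Delta$. The column indexed by $ij$, with $i\le j$, contributes $y_i+y_j+z_{i+j}$ to $(y,z)^TB_\Delta$. This holds also when $i=j$, because the entry $2$ in row $i$ gives $2y_i$. So the left kernel is exactly the set of $(y,z)$ with
\[
y_i+y_j+z_{i+j}=0 \qquad \text{for all } 1\le i\le j\le\Delta .
\]
Two solutions are immediate:
\begin{itemize}
\item $y\equiv 1$ together with $z\equiv -2$, which says every edge has two endpoints;
\item $y_a=a$ together with $z_\kappa=-\kappa$, which says the endpoint degrees of an edge sum to its index $\kappa$.
\end{itemize}
These two vectors are clearly linearly independent.

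\textbf{Step 3 (no other relations).} Take any solution $(y,z)$. From the system, $y_i+y_j=-z_{i+j}$ depends only on $i+j$. For $2\le i\le \Delta-1$, compare the columns $(i-1,i+1)$ and $(i,i)$, which share the sum $2i$. This gives
\[
y_{i-1}+y_{i+1}=2y_i ,
\]
so $a\mapsto y_a$ is affine. Then $z_\kappa=-(y_i+y_j)$ for any $i+j=\kappa$ is determined by $y$. Hence $(y,z)$ is a linear combination of the two vectors from Step 2, the left kernel is $2$-dimensional, and $\rnk(B_\Delta)=3\Delta-3$.

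\textbf{Main obstacle.} The only delicate point is Step 3: checking that every curvature index $\kappa$ is actually realized by some column, so that $z$ is fully determined, and that the three-term relations are available for all $i$. Every $\kappa\in\{2,\dots,2\Delta\}$ is realized, for instance by $(\lfloor\kappa/2\rfloor,\lceil\kappa/2\rceil)$. The relations need $\Delta\ge 3$ to be nonvacuous, which is exactly the hypothesis of the statement.
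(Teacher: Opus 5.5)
Your proof is correct and complete, but it is not the paper's route, because the paper does not prove this statement. It imports the rank from Roost et al. It then uses the identity $\rnk\Lambda(B_\Delta)=\binom{\Delta+1}{2}+\rnk(B_\Delta)$, which is your Step~1 justified there by the block lower-triangular shape, in the opposite direction: to deduce $\rnk(B_\Delta)=3\Delta-3$ and hence $\dim\ker B_\Delta=\binom{\Delta-2}{2}$.

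You instead compute $\rnk(B_\Delta)$ directly through its left kernel and then lift. This gives a self-contained proof. It also describes explicitly the only two row dependencies of $B_\Delta$. Evaluated on a JDM, they read $\sum_a a|V_a|=2\sum_\kappa|E_\kappa|$, which is the handshake lemma, and $\sum_a a^2|V_a|=\sum_\kappa \kappa|E_\kappa|$, which is the identity $\sum_v \deg(v)^2=\sum_{uv\in E}(\deg u+\deg v)$. As a result, the kernel dimension used in Section~4 no longer depends on the external reference.

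One small correction to your closing remark: the hypothesis is not used where you say it is. For $\Delta=2$ the three-term relations are vacuous, but every $y\in\mathbb{R}^2$ is trivially affine, so your argument and the formula go through unchanged. The only place a lower bound on $\Delta$ enters is the linear independence claimed in Step~2, which needs $\Delta\ge 2$. For $\Delta=1$ your two vectors coincide, and indeed $\rnk\Lambda(B_1)=2\neq 1$.
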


Recall that $B_\Delta$ is a $(3\Delta - 1) \times \binom{\Delta+1}{2}$ integer matrix and $\Lambda(B_\Delta)$ is its Lawrence lifting. Since $\Lambda(B_\Delta)$ is block lower-triangular, we see that
\[
\rnk(\Lambda(B_\Delta)) = \binom{\Delta+1}{2} + \rnk(B_\Delta) = \binom{\Delta+1}{2} + 3\Delta - 3.
\]
Thus the rank of $B_\Delta$ is $3\Delta - 3$. 
So to find a lattice basis, it is enough to exhibit ${\Delta +1 \choose  2}-3\Delta+3$ linearly independent integer vectors in the kernel of $B_\Delta$. These can then be extended to a lattice basis for $\Lambda(B_\Delta)$ via the map described in Equation \ref{eq: move lifting}. 

In fact, we will find ${\Delta +1 \choose  2}-3\Delta+3$ linearly independent degree 3 moves.
To find these Markov moves, we first characterize \emph{all} degree 3 moves for every value of $\Delta$. Then we will select a linearly independent subset of them as a lattice basis. We require the following observation that the minimal degree of a move in $\ker(B_\Delta)$ is $3$.

\begin{proposition}\label{prop: no degree 2 moves}
    There are no Markov moves of degree $1$ or $2$ in $\ker(B_\Delta)$.
\end{proposition}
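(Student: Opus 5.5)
Suppose $\bu = \bu^+ - \bu^- \in \ker_\ZZ(B_\Delta)$ is a nonzero move. Here $\bu^+$ and $\bu^-$ are nonnegative with disjoint supports and equal coordinate sums $d$. The plan is to exclude $d=1$ and $d=2$ using only the two families of constraints encoded by $B_\Delta$: equal multiplicities of each index $a$, and equal multiplicities of each sum $\kappa$.

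\emph{Degree $1$.} Here $\bu^+ = \be_{ij}$ and $\bu^- = \be_{kl}$ with $\{i,j\}\neq\{k,l\}$ as multisets. Each index must occur with the same multiplicity on both sides, so the multisets $\{i,j\}$ and $\{k,l\}$ coincide. This contradicts disjointness of supports.

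\emph{Degree $2$.} Write $\bu^+ = \be_{ij}+\be_{kl}$ and $\bu^- = \be_{pq}+\be_{rs}$, allowing repeated pairs. The index condition says the multiset of four indices $\{i,j,k,l\}$ equals $\{p,q,r,s\}$. The sum condition says the multiset of sums $\{i+j,\,k+l\}$ equals $\{p+q,\,r+s\}$. Without loss of generality, $i+j = p+q$ and $k+l = r+s$.

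Let $m$ be the smallest of the four indices, and say $m=i$ (after relabeling on the positive side). The index $m$ also occurs on the negative side; relabel so that $m=p$. Since $i+j=p+q$, this forces $j=q$, so $\{i,j\}=\{p,q\}$ and $\bu^+$ and $\bu^-$ share the coordinate $\be_{ij}$. That contradicts disjointness.

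The step to handle carefully is this relabeling. The occurrence of $m$ on the negative side might instead lie in the pair $(r,s)$, the one paired with $(k,l)$ by sums. In that case, rather than relying on symmetry, I would argue with the multiset of pairs directly. Removing $m$ from both index multisets leaves three indices on each side, and the two sum constraints are symmetric under swapping which pair is matched. One then checks the two cases $m\in\{p,q\}$ and $m\in\{r,s\}$. In the second case, $m=r$ gives $s = k+l-m$, and this is the other element of $\{k,l\}$ only if $m\in\{k,l\}$. If $m\notin\{k,l\}$, then $m=i$ appears on the positive side only in $(i,j)$. Comparing the remaining three indices $\{j,k,l\}$ with $\{p,q,s\}$, together with the sum constraints $j+m=p+q$ and $k+l=m+s$, forces $\{p,q\}=\{j',\ldots\}$. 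I expect this to collapse, by a short case analysis on minimality, to equal pairs again. That completes the argument that no moves of degree $1$ or $2$ exist.
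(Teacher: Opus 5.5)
\textbf{Gap in the final case.} Your degree-$1$ argument is correct. In degree $2$, two parts are also correct: the case where the minimal index $m$ lies in the negative pair $\{p,q\}$ (the one matched to $\{i,j\}$ by sums), and the subcase $m=r\in\{k,l\}$. The remaining subcase is $m=r$ with $m\notin\{k,l\}$. This is the only case that needs real work, and you leave it as ``I expect this to collapse''. The phrase ``forces $\{p,q\}=\{j',\ldots\}$'' is not an argument, so as written the proof is incomplete.

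\textbf{How to close it.} The case does collapse, using minimality. Since $m$ is the smallest index and $m\notin\{k,l\}$, we have $k,l>m$, so $s=k+l-m>\max(k,l)$. Remove one copy of $m=i=r$ from the two equal index multisets to get $\{j,k,l\}=\{p,q,s\}$. Because $s\notin\{k,l\}$, this forces $s=j$ and therefore $\{p,q\}=\{k,l\}$. That contradicts the disjointness of $\supp(\bu^+)$ and $\supp(\bu^-)$, and the same argument covers $j=m$. Also state explicitly that relabeling so that $m$ lies in the first positive pair requires swapping the two negative pairs at the same time. Otherwise the sum matching $i+j=p+q$, $k+l=r+s$ is lost.

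\textbf{Comparison with the paper.} The paper avoids the case analysis by using the index condition first. The negative side is a pairing of the same four indices $i,j,k,\ell$ that shares no pair with $\{ij,k\ell\}$. After swapping the labels $k$ and $\ell$, it is therefore $\{ik,j\ell\}$. The sum condition $\{i+j,k+\ell\}=\{i+k,j+\ell\}$ (as multisets) then gives $j=k$ or $i=\ell$, and either one makes the move zero. No minimality argument is needed.
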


\begin{proof}
    Since no two columns of $B_\Delta$ are equal, there are no degree $1$ moves in $\ker(B_\Delta)$.
    Suppose that 
    \[
    \be_{ij} + \be_{k\ell} - \be{ik} - \be{j \ell}
    \]
    belongs to $\ker(B_\Delta)$. Note that by construction, any move in $\ker(B_\Delta)$ must have this form and must satisfy $i + j = i+k$ and $k + \ell = j + \ell$. But this implies that $j = k$ and that the above Markov move is a the zero vector.
\end{proof}

\begin{proposition}\label{Prop:Degree3Moves}
    Let $i,j,k,\ell$ be integers such that $1 \leq i < j < k < \ell \leq \Delta$. Let $a = i + \ell - j$ and let $b = i + k - j$. Then the vector 
\begin{equation}\label{eq:MarkovDeg3Moves}
\mathbf{b}_{ijk\ell}=\mathbf{e}_{i\ell}+\mathbf{e}_{ak}+\mathbf{e}_{jb}-\mathbf{e}_{ik}-\mathbf{e}_{ja}-\mathbf{e}_{b\ell}
\end{equation}
belongs to $\ker_\ZZ(B_\Delta)$. Moreover, all degree 3 Markov moves for $B_\Delta$ are of this form and all of these are distinct.
\end{proposition}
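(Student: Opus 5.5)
The plan has three parts: check membership in the kernel directly, classify degree $3$ moves by a combinatorial analysis, and recover $(i,j,k,\ell)$ from the move to get distinctness.

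\textbf{Membership in the kernel.} First I check that $a$ and $b$ are valid indices. From $i<j<\ell$ we get $i<a=i+\ell-j<\ell$, and from $i<j<k$ we get $i<b=i+k-j<k$, so $a,b\in\{1,\dots,\Delta\}$. The row conditions of $B_\Delta$ say that the multiset of indices and the multiset of pair sums agree on the two sides.
\begin{itemize}
\item The positive pairs $(i,\ell),(a,k),(j,b)$ have sums $i+\ell$, $i+\ell+k-j$ and $i+k$.
\item The negative pairs $(i,k),(j,a),(b,\ell)$ have sums $i+k$, $i+\ell$ and $i+k+\ell-j$.
\item Both sides use the index multiset $\{i,j,k,\ell,a,b\}$.
\end{itemize}
So $\mathbf{b}_{ijk\ell}\in\ker_\ZZ(B_\Delta)$. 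I also need $\mathbf{b}_{ijk\ell}\neq 0$, that is, no positive pair equals a negative pair. This follows from a few inequalities, for instance $(i,\ell)\ne(i,k)$ since $k<\ell$, $(i,\ell)\neq(b,\ell)$ since $b>i$, and similarly for the rest.

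\textbf{Classification of degree $3$ moves.} Let $\bu^+=\be_{x_1x_2}+\be_{x_3x_4}+\be_{x_5x_6}$ be a degree $3$ move. Its negative part must re-pair the same six index slots into three pairs. Superimposing the two pairings gives a $2$-regular multigraph on the six slots, which is a union of alternating cycles.
\begin{itemize}
\item A $2$-cycle means a pair common to $\bu^+$ and $\bu^-$, which is impossible since their supports are disjoint.
\item A $4$-cycle together with a $2$-cycle is likewise excluded.
\end{itemize}
So, after relabeling, the move is a single alternating hexagon: positive pairs $\{x_1,x_2\},\{x_3,x_4\},\{x_5,x_6\}$ and negative pairs $\{x_2,x_3\},\{x_4,x_5\},\{x_6,x_1\}$.

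Next I impose that the sum multisets agree, via some bijection $\sigma$ from positive sums to negative sums. If a positive pair's sum equals the sum of a negative pair adjacent to it in the hexagon, the shared slot cancels and two indices coincide. Pursuing this forces a pair to repeat, or yields a degree $2$ sub-move, which Proposition~\ref{prop: no degree 2 moves} forbids. This should leave essentially one admissible pattern for $\sigma$ up to the dihedral symmetry of the hexagon. I then solve the resulting three linear equations and order the free parameters: let $\ell$ be the largest index, $i$ the smallest, and $k,j$ the other partners, so that the solution takes exactly the form \eqref{eq:MarkovDeg3Moves}. The form should be understood up to the global sign of the move, since $-\mathbf{b}_{ijk\ell}$ is also a move.

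\textbf{Distinctness.} Given $\mathbf{b}_{ijk\ell}$, I recover the parameters as follows: $i$ is the minimum index and $\ell$ the maximum index appearing; $k$ is the partner of $i$ in the negative part; and then $j$ is determined by the remaining pair containing the middle value. So the map $(i,j,k,\ell)\mapsto\mathbf{b}_{ijk\ell}$ is injective.

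The main obstacle is the case analysis of the sum bijection $\sigma$ over the alternating hexagon. I will organize it by the position of the global maximum index $\ell$, which must appear once in $\bu^+$ and once in $\bu^-$. Its two pairs must then have sums matched non-adjacently, and this should pin down the remaining choices quickly.
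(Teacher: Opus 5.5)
Your kernel computation (including the check that no positive pair equals a negative pair) and your distinctness argument are fine. Your alternating-hexagon set-up also justifies the paper's ``this is the only way of assigning equalities'' more cleanly than the paper does.

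The step you single out as the main obstacle is actually immediate. If a positive pair and a negative pair share a slot and have equal sums, their other slots carry equal values. So the two pairs coincide and lie in $\supp(\bu^+)\cap\supp(\bu^-)$, which is impossible. Every bijection $\sigma$ other than the one matching each positive pair with its opposite (disjoint) negative pair sends some positive pair to an adjacent one. Hence the sum conditions are exactly $x_1+x_2=x_4+x_5$, $x_3+x_4=x_6+x_1$, $x_5+x_6=x_2+x_3$, which are the paper's three equations.

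The real gap is the step you pass over as ``order the free parameters''. Any solution of these equations can be written formally as $\mathbf{e}_{i\ell}+\mathbf{e}_{ak}+\mathbf{e}_{jb}-\mathbf{e}_{ik}-\mathbf{e}_{ja}-\mathbf{e}_{b\ell}$. The whole content of the classification is that the labelling can be chosen with $i<j<k<\ell$, and you give no argument for this.

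Your suggested labelling also does not work as stated. First, it presupposes that the smallest and largest indices are neighbours in the hexagon, which you have not shown. Second, in $\mathbf{b}_{ijk\ell}$ the index $j$ is not a partner of $i$ or of $\ell$. It is the slot opposite $i$ in the hexagon $i,\ell,b,j,a,k$, and the second partner of $\ell$ is $b=i+k-j$. For example, in $\mathbf{b}_{1245}$ the partners of $5$ are $1$ and $3$, whereas $j=2$.

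What is missing is the analogue of the paper's claim $u<z<v$:
\begin{itemize}
\item Put a slot of minimal value $i$ at $x_1$. After relabelling the hexagon and negating the move if necessary, let $\ell=x_2$ be the larger and $k=x_6$ the smaller of its two neighbours, and let $j=x_4$ be the opposite slot. Then $x_5=i+\ell-j=a$ and $x_3=i+k-j=b$.
\item $k\neq\ell$, since otherwise $\{i,\ell\}$ lies in both supports.
\item Minimality of $i$ gives $j\geq i$ and $b\geq i$, i.e.\ $j\leq k$.
\item $j=i$ or $j=k$ would give $\{x_3,x_4\}=\{k,i\}=\{x_6,x_1\}$, again a common pair.
\end{itemize}
Hence $i<j<k<\ell$. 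Only after this does it follow that $a,b<\ell$, i.e.\ that $\ell$ is the global maximum.
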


\begin{proof}
    Each index $i, j, k, \ell, a, b$ appears exactly once with positive and negative coefficient. Moreover, we have
    \begin{eqnarray*}
        i + \ell &= j + i + \ell - j = j +a \\
        a + k &= i + \ell - j + k = b + \ell \\
        j + b &= j + i + k - j = i + k.
    \end{eqnarray*}
    So by construction, $\bb_{ijk\ell} \in \ker_\ZZ(B_\Delta)$. The fact that $i,j,k,\ell$ are all distinct guarantees that $\bb_{ijk\ell}$ is not the zero vector. Indeed, the index $i \ell$ appears with positive coefficient but not with negative coefficient since $k < \ell$ and $b > i$. By Proposition \ref{prop: no degree 2 moves}, $B_\Delta$ has no degree 1 or 2 Markov moves; so we conclude that $\bb_{ijk\ell}$ has degree 3.

    Next, we claim that all degree $3$ moves are of this form. Let $\bb \in \ker_\ZZ(B_\Delta)$ be a degree 3 move and suppose that its positive support $\bb^+ = \be_{uv} + \be_{wx} + \be_{yz}$ where $u$ is minimal among these indices. Then since each index $u,v,w,x,y,z$ appears with the same multiplicity in its negative support $\bb^-$ and no pair of indices appears in both $\bb^+$ and $\bb^-$, we have without loss of generality $\bb^- = \be_{uw} + \be_{vy} + \be_{xz}.$ Indeed, if the index $u$ is paired with one of $w$ or $x$, then $v$ must be paired with one of $y$ or $z$ since otherwise $\be_{yz}$ will appear in both $\bb^+$ and $\bb^-$. After deciding the indices to pair with $u$ and $v$, the last pair is determined. We must also have
    \begin{align*}
        u + v &= x +z, \\
        w+x & = v + y, \text{ and} \\
        y + z  & = u + w.
    \end{align*}
    This is the only way of assigning equalities so that no index appears on both sides of an equality.
    Without loss of generality, we may assume $v < w$; indeed, if not, then negating $\bb$ gives a scalar multiple of this lattice basis element of the desired form. 
    
    We claim that $u < z < v$. If $z < u$, then this contradicts that $u$ was chosen to be minimal among the indices. If $z = u$, then this would imply that $ u + \alpha = u+\beta$ for some $\alpha = v, y$ and $\beta = w, x$. But then one of $v, y$ and one of $w, x$ are equal to each other, which contradicts that no pair of indices appears in both $\bb^+$ and $\bb^-$. Similar reasoning shows that $ z \neq v.$ Finally, if $z > v$, then $u + v - z = x < u$, which contradicts the minimality of $u$. So we conclude that $u < z < v < w$ and this move is $\bb_{uzvw}$. 

    Finally, each of the moves $\bb_{ijk\ell}$ are distinct. Indeed, suppose $\bb_{ijk\ell} = \bb_{i'j'k'\ell'}$. Then since $i$ is the minimal index that appears in either move, we must have $i = i'$. Moreover, $\ell$ and $k$ are the indices that are paired with $i$ and $\ell > k$. So we must have $k = k'$ and $\ell = \ell'$. Finally $j$ is the unique index that appears in both terms of $\bb_{ijk\ell}$ without any of $i,k,$ or $\ell$. So we must have $j = j'$.
\end{proof}

Our goal is to find a subset of the degree 3 moves described in Proposition \ref{Prop:Degree3Moves} that form a lattice basis for $B_\Delta$. Note that the dimension of the kernel of $B_\Delta$ is 
$
{\Delta +1 \choose  2}-3\Delta+3 = {\Delta - 2 \choose 2}$. Let $M$ be the $\binom{\Delta +1}{2} \times \binom{\Delta}{4}$ matrix whose rows are indexed by pairs $uv$ with $1 \leq u \leq v \leq \Delta$ and whose columns are the vectors $\bb_{ijk\ell}$ for $1 \leq i < j < k < \ell \leq \Delta$. We will find a square $\binom{\Delta -2}{2} \times \binom{\Delta -2}{2}$ submatrix of $M$ that is upper-triangular with nonzero entries on its diagonal. Its rows will be indexed by
\[
R=\{xy \ | \ 2<x \leq y<\Delta\},
\]
which we note has cardinality $\binom{\Delta-2}{2}$.
We select the columns of this square submatrix to be indexed by 
\[
I=\{ijk\ell\ | \ i=1, j=y-x+2, \ k=y,\ \ell=y+1 \text{ for each } xy\in R \  \}.
\]
Note that for each $ijk\ell \in I$, we indeed have $i < j < k < \ell$ since $2 < x \leq y$. So $\bb_{ijk\ell}$ is a column of $M$.

\begin{example}\label{ex:LatticeBasis}
    Let $\Delta = 5$. In this case, $R = \{33, 34, 44\}$ and $I = \{1234, 1345, 1245\}$. Note that $\{\bb_{1234}, \bb_{1345}, \bb_{1245} \}$ does indeed span the kernel of $B_5$.
\end{example}

\begin{theorem}
    The $R \times I$ submatrix of $M$ is upper-triangular with nonzero diagonal entries. In particular,
    \[
    \{\bb_{ijk\ell} \mid ijk\ell \in I\}
    \]
    is a lattice basis for $B_\Delta$.
\end{theorem}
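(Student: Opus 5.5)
The plan is to compute each chosen column explicitly, restrict it to the rows in $R$, and exhibit an order on $R$ (and the matching order on $I$) that makes the $R\times I$ submatrix triangular with diagonal entries $1$. A dimension count then gives a $\mathbb{Q}$-basis of the kernel, and unimodularity upgrades this to a $\mathbb{Z}$-basis.

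\emph{Step 1: compute the columns.} Fix $xy\in R$ and the corresponding $ijk\ell=(1,\,y-x+2,\,y,\,y+1)\in I$. In the notation of Proposition~\ref{Prop:Degree3Moves}, $a=i+\ell-j=x$ and $b=i+k-j=x-1$. Hence, with $j=y-x+2$,
\[
\bb_{1,j,y,y+1}=\be_{1,y+1}+\be_{x,y}+\be_{j,x-1}-\be_{1,y}-\be_{j,x}-\be_{x-1,y+1}.
\]
The two terms involving the index $1$ lie outside $R$, since every pair in $R$ has both indices greater than $2$. So within the rows $R$ the only possible nonzero entries of this column are at $xy$, at $\{j,x-1\}$, at $\{j,x\}$ and at $\{x-1,y+1\}$, the last three only when those pairs lie in $R$.

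\emph{Step 2: order by maximal index.} Order $R$ by the larger index $y$ of a pair, breaking ties arbitrarily, and order $I$ the same way through the bijection $xy\mapsto ijk\ell$. I then compare the maximal index of each extra pair with $y$:
\begin{itemize}
\item The pair $\{x-1,y+1\}$ has maximal index $y+1>y$.
\item For $\{j,x\}$: we have $j=y-x+2\le y-1$ because $x\ge 3$. So its maximum is at most $y$, with equality only when $x=y$; but then $j=2$ and the pair is not in $R$.
\item For $\{j,x-1\}$: both $j$ and $x-1$ are at most $y-1$, so its maximum is strictly less than $y$.
\end{itemize}
Thus every off-diagonal nonzero entry of column $xy$ that lies in $R$ sits in a row whose maximal index is different from $y$.

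The tidy case is $\{j,x-1\}$, which lies strictly below $y$. The main obstacle is that $\{x-1,y+1\}$ lies strictly above $y$ while $\{j,x\}$ lies at or below it, so a single "maximal index" order does not obviously put all off-diagonal entries on one side of the diagonal. I would first check whether $\{x-1,y+1\}\in R$ can ever occur together with a nonzero $\{j,x\}$ entry in a way that breaks triangularity. If it can, I would refine the order, for instance lexicographically by $(y,\,-x)$ or by the index sum $x+y$ with ties broken by $y$, and recheck the three pairs case by case. This is the step where I expect to adjust the order.

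\emph{Step 3: the diagonal.} The $xy$ entry of $\bb_{1,j,y,y+1}$ is $+1$. No other term cancels it, because Step 2 shows that none of the other five pairs equals $\{x,y\}$.

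\emph{Step 4: lattice basis.} The $R\times I$ submatrix is triangular with $\pm1$ on the diagonal, so it is unimodular. In particular the $\binom{\Delta-2}{2}$ vectors $\{\bb_{ijk\ell}\mid ijk\ell\in I\}$ are linearly independent. Since $\rnk(B_\Delta)=3\Delta-3$, the kernel has dimension $\binom{\Delta-2}{2}$, so these vectors span $\ker_{\mathbb{Q}}(B_\Delta)$.

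The same count shows that the projection $\pi_R$ onto the $R$-coordinates is injective on $\ker_{\mathbb{Q}}(B_\Delta)$: it maps a basis of the kernel to a basis of $\mathbb{Q}^R$. Now let $\bv\in\ker_{\mathbb{Z}}(B_\Delta)$. Then $\pi_R(\bv)\in\mathbb{Z}^R$. By unimodularity there are integers $c$ with $\pi_R\big(\sum c\,\bb_{ijk\ell}\big)=\pi_R(\bv)$. The difference $\bv-\sum c\,\bb_{ijk\ell}$ lies in the kernel and has zero projection, so it vanishes by injectivity. Hence these vectors form a $\mathbb{Z}$-basis of $\ker_{\mathbb{Z}}(B_\Delta)$, that is, a lattice basis for $B_\Delta$.
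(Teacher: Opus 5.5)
\textbf{Gap in Step 2.} Your proof never produces an ordering that makes the $R\times I$ submatrix triangular, and the ordering you do propose cannot work. Take any $\Delta\ge 7$ and the column indexed by $xy=45$. Here $j=3$, and the column is $\bb_{1356}=\be_{16}+\be_{45}+\be_{33}-\be_{15}-\be_{34}-\be_{36}$. Within $R$ its nonzero entries lie in rows $33$ and $34$, whose larger index is below $5$, and in row $36$, whose larger index is above $5$. So no order that sorts primarily by the larger index can be triangular, and this includes your refinement by $(y,-x)$.

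Your other candidate, index sum with ties broken by $y$, works only if ties are broken by \emph{decreasing} $y$. The pair $\{x-1,y+1\}$ has the same sum $x+y$ as the diagonal pair, while $\{j,x-1\}$ and $\{j,x\}$ have sums $y+1,\,y+2<x+y$. With increasing $y$, row $36$ again falls after $45$ while rows $33$ and $34$ fall before it. Since the triangularity is the heart of the statement, leaving the order as ``to be adjusted'' leaves the proof incomplete.

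\textbf{The fix.} The missing idea, which is the paper's argument, is to order pairs lexicographically by their \emph{smaller} index first. Four of the off-diagonal pairs, namely $\{1,y\}$, $\{1,y+1\}$, $\{x-1,y+1\}$ and $\{j,x-1\}$, all have smaller index at most $x-1$. The remaining pair $\{j,x\}$ behaves as follows:
\begin{itemize}
\item if $j<x$, its smaller index is $j<x$;
\item if $j\ge x$, it equals $(x,j)$ with $j=y-x+2\le y-1<y$.
\end{itemize}
In both cases it precedes $(x,y)$. So $xy$ is the lexicographically last element of the support of its column, which gives upper triangularity with diagonal entries $+1$.

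With this order in place, your Steps 3 and 4 go through unchanged. Your Step 4 (unimodularity plus injectivity of the projection onto the $R$-coordinates on $\ker_{\mathbb{Q}}(B_\Delta)$) is in fact a more careful justification of the $\ZZ$-basis claim than the paper's. The paper only observes that the number of vectors equals the nullity, and that count by itself yields only a finite-index sublattice.
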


\begin{proof}
    We consider the rows of $M$ to be lexicographically ordered. Let $xy \in R$ and let $i = 1$, $j = y -x +2$, $k=y$ and $\ell = y+1$. We wish to find the last entry in the support of $\bb_{ijk\ell}$. Using the notation of Proposition \ref{Prop:Degree3Moves}, we have $a = i + \ell - j = x$ and $b = i + k - j = x-1$. The support of $\bb_{ijk\ell}$, with indices written as sets since their order is not known in all cases, is
    \begin{equation}\label{eq:Lexicographic}
    \Big\{
    \{1, y \}, \{1, y+1\}, \{x-1, y+1\}, \{y - x +2, x\}, \{y-x +2, x-1\} , \{ x,y\}
    \Big\}.
    \end{equation}
    We claim that $\{x,y\}$ is lexicographically last among these. Since $2 < x \leq y$, it is clear that $\{1,y\}, \{1,y+1\}$ and $\{x-1,y+1\}$ are lexicographically before $\{x,y\}$. If $y-x+2 \leq x$, then $\{y - x +2, x\}$ and $ \{y-x +2, x-1\} $ are both lexicographically before $\{x ,y\}$. Moreover, if $y - x +2 > x$, then since $x > 2$, we have $y -x +2 <y$. So in this case, the sets $\{x, y-x+2\}$ and $\{x-1, y-x+2\}$ are still lexicographically before $\{x,y\}$. 

    Hence the last nonzero entry of $\bb_{ijk\ell}$ is indexed by $xy$ and the $R \times I$ submatrix of $M$ is upper-triangular with nonzero diagonal. So $\{\bb_{ijk\ell} \mid ijk\ell \in I\}$ is a collection of $\binom{\Delta - 2}{2}$ elements of $\ker_\ZZ(B_\Delta)$. Since the nullity of $B_\Delta$ is $\binom{\Delta - 2}{2}$, these form a lattice basis for $\ker_\ZZ(B_\Delta)$. Extending them by
    \[
    \overline{\bb}_{ijk\ell} = \begin{pmatrix}
        \bb_{ijk\ell} \\
        -\bb_{ijk\ell}
    \end{pmatrix}
    \]
    yields a lattice basis for $\ker_\ZZ(B_\Delta)$.
\end{proof}

\begin{example}
    Again, let $\Delta = 5$ so that $I$ is as in Example \ref{ex:LatticeBasis}. The indices in the support of $\bb_{1234}, \bb_{1345}$ and $\bb_{1245}$ are $\{13, 14, 24, 23, 22, \mathbf{33}\}, \{14, 15, 25, 33, 23, \mathbf{34} \}$ and $\{14, 15, 35, 24, 23, \mathbf{44} \}$, respectively, written in the order prescribed in Equation \ref{eq:Lexicographic}. The lexicographically last entry in each of these is highlighted, and we note that in each case it is equal to the given $xy$.
\end{example}

\section{Exploration of Joint Degree Matrix Fibers via Reinforcement Learning}\label{Section:ReinforcemengtLearning}

As discussed in the previous sections, a minimal Markov basis $\mathcal{M}$ for an integer matrix $A \in \mathbb{Z}^{d \times n}$ ensures the connectivity of every fiber $\mathcal{F}_A(u)$ for all vectors $u \in \mathbb{N}^n$. However, for a fixed vector $u$ and its corresponding fiber $\mathcal{F}_A(u)$, it is typically unnecessary to use all elements of the full Markov basis to achieve connectivity \cite{dobra2012dynamic}. To the best of our knowledge, there is no exact method for determining a fiber-specific set of moves more efficiently than computing the complete Markov basis. Nevertheless, in \cite{ActorCriticMarkovBases_Petrovic}, the problem is reformulated as a Markov decision process, and a reinforcement learning (RL) approach is proposed to learn ``good'' moves for exploring a specific fiber.

We apply this RL method to the problem of exploring Joint Degree Matrices (JDMs) corresponding to given degree and curvature sequences. This approach enables us to compute moves for fibers generated by graphs with a maximum degree higher than $\Delta = 8$, which is the limit at which the algebraic software \texttt{4ti2} fails to terminate within several days. While replacing exact algebraic techniques with a stochastic RL algorithm reduces theoretical certainty, a rigorous convergence analysis for this method is provided in \cite[Section~4]{ActorCriticMarkovBases_Petrovic}. One limitation of this method is the need in practice to bound a priori the coefficients in the linear combinations of elements of lattice bases to sample an admissible move (that is, an element of a Markov basis), see \cite[Page 39]{ActorCriticMarkovBases_Petrovic}. However, for a given fiber, we have a natural bound for the maximum integer coefficients in the linear combination in our case. In fact, we know that all the graphs with the same degree sequence have the same number of edges, as $|E(G)|=\tfrac{1}{2}\sum^{\Delta}_{a=1} a|V_a|=\tfrac{1}{2}\left(\sum^{\Delta}_{a,b=1} J_{ab}+\sum^{\Delta}_{a=1}J_{aa}\right) \geq \sum_{1\leq a
\leq b\leq \Delta} J_{ab} $, where the symmetric $\Delta\times \Delta$ matrix $J$ is the JDM of the graph $G$.  Therefore, the degree of the Markov moves applicable to the fiber of $J$ is trivially upper bounded by twice the number of edges corresponding to the fiber of $J$.

We evaluate the reinforcement learning algorithm on various graphs with different maximum degrees and JDMs. Specifically, we test the algorithm on realizations of Erd\H{o}s--R\'{e}nyi and Barab\'{a}si--Albert random graphs \cite[see Sections 5 and 8]{van2017random}, as well as the Karate Club graph \cite{zachary1977information}, a standard benchmark in network science. All graph realizations, including the Karate Club graph, are generated using the \texttt{networkx} package. For random graphs we consider a specific realization fixing \texttt{seed=31} to ensure deterministic and reproducible outputs. Additionally, we test graphs with ``dense'' JDMs (those with no zero entries), such as the graph associated with the following JDM:

\setcounter{MaxMatrixCols}{20}

\begin{equation}\label{eq:C14Matrix}
\begingroup
\renewcommand{\arraystretch}{1.2} 
\setlength{\arraycolsep}{2.5pt}    
\small                            
\mathcal{C}_{15}=\begin{pmatrix}
1 & 1 & 1 & 1 & 1 & 1 & 1 & 1 & 1 & 1 & 1 & 1 & 1 & 1 & 1 \\
1 & 1 & 1 & 1 & 1 & 1 & 1 & 1 & 1 & 1 & 1 & 1 & 1 & 1 & 1 \\
1 & 1 & 2 & 1 & 1 & 1 & 1 & 1 & 1 & 1 & 1 & 1 & 1 & 1 & 1 \\
1 & 1 & 1 & 1 & 1 & 1 & 1 & 1 & 1 & 1 & 1 & 1 & 1 & 1 & 1 \\
1 & 1 & 1 & 1 & 3 & 1 & 1 & 1 & 1 & 1 & 1 & 1 & 1 & 1 & 1 \\
1 & 1 & 1 & 1 & 1 & 2 & 1 & 1 & 1 & 1 & 1 & 1 & 1 & 1 & 1 \\
1 & 1 & 1 & 1 & 1 & 1 & 14 & 1 & 1 & 1 & 1 & 1 & 1 & 1 & 1 \\
1 & 1 & 1 & 1 & 1 & 1 & 1 & 1 & 1 & 1 & 1 & 1 & 1 & 1 & 1 \\
1 & 1 & 1 & 1 & 1 & 1 & 1 & 1 & 38 & 1 & 1 & 1 & 1 & 1 & 1 \\
1 & 1 & 1 & 1 & 1 & 1 & 1 & 1 & 1 & 43 & 1 & 1 & 1 & 1 & 1 \\
1 & 1 & 1 & 1 & 1 & 1 & 1 & 1 & 1 & 1 & 59 & 1 & 1 & 1 & 1 \\
1 & 1 & 1 & 1 & 1 & 1 & 1 & 1 & 1 & 1 & 1 & 65 & 1 & 1 & 1 \\
1 & 1 & 1 & 1 & 1 & 1 & 1 & 1 & 1 & 1 & 1 & 1 & 84 & 1 & 1 \\
1 & 1 & 1 & 1 & 1 & 1 & 1 & 1 & 1 & 1 & 1 & 1 & 1 & 91 & 1 \\
1 & 1 & 1 & 1 & 1 & 1 & 1 & 1 & 1 & 1 & 1 & 1 & 1 & 1 & 113
\end{pmatrix}.
\endgroup
\end{equation}

We remark that the actor-critic algorithm is inherently stochastic and may not identify all possible moves in a single execution. Consequently, the number of sampled states may vary across multiple runs, and the number of moves discovered might be smaller than the true cardinality of the fiber. However, the states identified by the algorithm provide a guaranteed lower bound for the cardinality of the elements contained in the fiber of a given JDM. Our findings are summarized in Table \ref{tab:my_table}.

 \begin{table}[h]
    \centering
    \begin{tabular}{|c|c|c|}
        \hline
        \textbf{Graph}& \textbf{Sampled states} & \textbf{Maximum degree (or JDM size)} \\ \hline
        G(50, 0.08)& 4& 7 \\ \hline
        G(100,0.02)& 105& 5 \\ \hline
        G(250,0.02)& 29 & 12\\ \hline
        G(1000,0.02)& 1 & 34\\ \hline
       $\mathcal{C}_{15}$ (Equation \ref{eq:C14Matrix})& 1749& 15 \\ \hline
        Barabasi-Albert(30,2)& 1 & 12 \\ \hline
        Barabasi-Albert(100,3)& 1 & 39 \\ \hline
        Karate club graph& 1 & 17 \\ \hline
    \end{tabular}
    \caption{Table of explored/sampled states for different graphs}
    \label{tab:my_table}
\end{table}

\subsection*{Data}

The code for the Reinforcement Learning algorithm adapted to our problem of JDMs exploration is available at the following link \hyperlink{https://github.com/GiulioZucal/Fiber-Sampling-Using-Reinforcement-Learning-DiscreteCurvature}{https://github.com/GiulioZucal/Fiber-Sampling-Using-Reinforcement-Learning-DiscreteCurvature} (Zenodo DOI: 10.5281/zenodo.21722444).

\subsection*{Acknowledgements}
This project began at the 1st MPI (Dresden + Leipzig) Day hosted by the Max Planck Institute for Mathematics in the Sciences. We thank the organizers, Türkü Özlüm Çelik and Irem Portakal, for their hospitality and for organizing the event. We thank Karel Devriendt for several helpful discussions.

\bibliographystyle{abbrv}
\bibliography{bibliography.bib}
\end{document}